\newtheorem{theorem}{Theorem}[section]
\newtheorem{lemma}[theorem]{Lemma}
\newtheorem{proposition}[theorem]{Proposition}
\newtheorem{corollary}[theorem]{Corollary}
\theoremstyle{definition}
\newtheorem{remark}[theorem]{Remark}
\newtheorem{question}[theorem]{Question}
\def\f#1{{\mathcal F}_{#1}}
\def\e{\varepsilon}
\def\R{{\mathbb R}}
\def\N{{\mathbb N}}
\DeclareMathOperator{\Dim}{Dim} 
\numberwithin{equation}{section}
\newcommand{\abs}[1]{\lvert#1\rvert}
\begin{document}

\title[Measures and the Law of the Iterated Logarithm]{Measures and the Law of
  the Iterated Logarithm}  

\author{Imen Bhouri} 
\address{Facult\'e des Sciences de Monastir, 5000 Monastir, Tunisia}
\email{Imen.Bhouri$@$fsm.rnu.tn}

\author{Yanick Heurteaux}
\address{Clermont Universit\'e, Universit\'e Blaise Pascal, 
Laboratoire de Math\'ematiques, bp 10448, F-63000 Clermont-Ferrand, France}
\address{CNRS, UMR 6620, Laboratoire de Math\'ematiques, F-63177 Aubi\`ere,  
France}
\email{Yanick.Heurteaux$@$math.univ-bpclermont.fr}

\subjclass[2000]{Primary:28A12,28A78,28D20; Secondary:60F10.}

\keywords{Iterated Logarithm Law, quasi-Bernoulli measure,  
Hausdorff dimension, Packing dimension.}

 

\begin{abstract}Let $m$ be  a unidimensional measure with dimension $d$. A
  natural question is to ask if the measure $m$ is comparable with the
  Hausdorff measure (or the packing measure) in dimension $d$. We give an
  answer (which is in general negative)  to this question in several 
  situations (self-similar measures, 
  quasi-Bernoulli measures). More precisely we obtain fine comparisons between
  the mesure $m$ and generalized Hausdorff type (or packing type)
  measures. The Law of the Iterated Logarithm or estimations of the
  $L^q$-spectrum in a neighborhood of $q=1$ are the tools to obtain such
  results. 
\end{abstract}

\maketitle

\section{Introduction}

For a given probability measure $m$  in $\mathbb R^D$ we define as usual 
\begin{equation}\label{dim}
\begin{cases}
\dim_*(m)=\inf\{\dim(E);~~ m(E)>0\}=\sup\{s\geq 0;~~ m\ll{\mathcal H}^s\}\\
\null\\
\dim^*(m)=\inf\{\dim(E); ~~m(E)=1\}=\inf\{s\geq 0;~~ m\bot{\mathcal H}^s\}
\end{cases}
\end{equation}
respectively the lower and upper dimension of the measure $m$, where 
${\mathcal H}^s$ define the  Hausdorff measure  and $\dim (E)$ is the 
Hausdorff dimension of a set $E$. 

When the equality $\dim_*(m)=\dim^*(m)$ is
satisfied, we say that the measure $m$ is unidimensional and we
denote by $\dim (m)$ the common value. In this situation, the measure $m$ is
carried by a set of dimension $d=\dim (m)$ while $m(E)=0$ for every Borel set
$E$ satisfying $\dim (E)<d$.

For such a unidimensional measure, it is natural to try to compare the measure
$m$ with the Hausdorff measure ${\mathcal H}^d$ and to ask the following
question :

\begin{question}
Does there exist a set $E_0\subset \text{supp}\, (m)$ and a constant $C>0$ 
such that
for every Borel set $A$, 
$m(A)=C\,{\mathcal H}^d(A\cap E_0)$ ?  
\end{question}
Or in a weaker form :
\begin{question}\label{q2}
Does there exist a set $E_0\subset \text{supp}\, (m)$ and a constant $C>0$ 
such that
for every Borel set $A$, 
$\frac1C\,{\mathcal H}^d(A\cap E_0)\le m(A)\le C\,{\mathcal H}^d(A\cap E_0)$ ?  
\end{question}
The answer to Question \ref{q2} is sometimes positive. Let us describe a
classical example. Let $K$ be a self-similar compact set in $\R^D$, that is
\begin{equation}
K=\bigcup_{i=1}^kS_i(K)
\end{equation}
where $S_1,\cdots ,S_k$ are similarities in $\R^D$ with ratio $0<r_i<1$. In the
case where the Open Set Condition is satisfied (see for example \cite{Fa} or
Section \ref{secself} for
a precise definition) it is well known that the Hausdorff dimension of the
compact set $K$ is the unique positive real number $\delta$ such that 
$\sum_{i=1}^kr_i^\delta=1$. A way to obtain this result is the following. Let
$\texttt{\bf p}=(p_1,\cdots ,p_k)$ be 
a probability vector and $m$  be the unique
probability measure such that 
\begin{equation}\label{selfsimilar}
m=\sum_{i=1}^kp_i\,m\circ S_i^{-1}\ .
\end{equation}
The measure $m$ is unidimensional with dimension
\begin{equation}\label{formuledimension}
\dim (m)=\frac{\sum_{i=1}^kp_i\log
  p_i}{\sum_{i=1}^kp_i\log r_i}\ .
\end{equation}
We can refer to \cite{Fal97} or \cite{Ya07} for a proof of this formula. 
The maximal value of $\dim (m)$ in (\ref{formuledimension}) is obtained when
$p_i=r_i^\delta$ for all $i$. In that case, $\dim(m)=\delta$ and it 
is possible to show
that  $m(A)\approx{\mathcal H}^d(A\cap K)$ for all $A$. 
That is the reason why in particular
$\dim (K)=\delta$. As we will see in Section \ref{secself}, this situation is
exceptional. For any other choice of the probability vector
$\texttt{\bf p}$, the measure $m$ is singular with respect to the Hausdorff
measure ${\mathcal H}^{\dim(m)}$. More precisely, using the Law of the
Iterated Logarithm, we will obtain in Theorem \ref{thself} precise
logarithm corrections in the comparison between the measure $m$ and
Hausdorff type measures.


\vskip 0.5cm
Similar quantities involving the packing measure $\widehat{{\mathcal P}}^s$ 
and packing dimension $\Dim$ can be defined
\begin{equation}\label{Dim}
\begin{cases}
\Dim_*(m)=\inf\{\Dim(E);~~ m(E)>0\}
=\sup\{s\geq 0;~~ m\ll\widehat{{\mathcal P}}^s\}
\\
\null\\
\Dim^*(m)=\inf\{\Dim(E); ~~m(E)=1\}
=\inf\{s\geq 0;~~ m\bot\widehat{{\mathcal P}}^s\}\ .
\end{cases}
\end{equation}
They are respectively called the lower and upper packing dimension of the 
measure $m$. 
For more details on packing dimension and packing measures, we can refer to
\cite{Fa} or to the original paper of Tricot \cite{tricot}.

\vskip 0.5cm
There are two fondamental ways to compute or to estimate the dimension of a
measure.

On one hand, the calculation of the dimension of measures may 
be a consequence of some 
independance properties and the use of the strong law of
large numbers. This is in particular the case for self-similar measures under
some separation condition. 

On the other hand, in a more abstract and general context, 
it is well known that the quantities defined in (\ref{dim}) and (\ref{Dim}) 
are strongly related to the
derivatives $\tau'_-(1)$ and $\tau'_+(1)$ 
of the $L^q$-spectrum at point 1 (see \cite{He} or \cite{Ngai}  
for example). In particular, if $\tau'(1)$ exists, the measure is
unidimensional and we have
\begin{equation}\label{equnidim}
-\tau'(1)=\dim_*(m)=\dim^*(m)=\Dim_*(m)=\Dim^*(m)\ .
\end{equation}
This is in particular the case for the so called quasi-Bernoulli measures (see
Section \ref{quasibernoulli} or \cite{BMP} for a precise definition 
and \cite{He} where it
is proved that the $L^q$-spectrum is differentiable in this situation).

\vskip 0.5cm
There are numerous works dealing with the comparison between measures and 
Hausdorff measures or packing measures. 
This can be done in an abstract context 
(\cite{BMP}, \cite{He},  \cite{BBH}, \cite{LN}), in a dynamical context 
(\cite{Lo}, \cite{Y}) or for concrete measures as harmonic measures 
(\cite{Ba}, \cite{Bourgain}, \cite{BH}, \cite{Mak}, \cite{MaV}). 
Logarithmic corrections are also proposed in several situations. In
particular, Makarov and Makarov-Volberg 
obtained such corrections for the harmonic measure, respectively in
Jordan domains (\cite{Mak}) and 
in self-similar Cantor sets (\cite{MaV}). Using 
a dyadic martingale that approximates the logarithm of the densities of the
measure at different scales, Llorente and Nicolau \cite{LLN} 
also obtained some abstract (and in general non explicit) logarithmic 
corrections in the case of doubling measure.

\vskip 0.5cm
In this work we pursue such studies and we try to obtain logarithmic
corrections in several situations, using the following two ideas. On one hand,
the Law of the Iterated Logarithm, which is more precise than 
the Strong Law of Large
Numbers, can be used in the case where some independance
properties are satisfied. On the other hand, 
estimations of $\tau(1+q)-q\tau'(1)$ near $q=0$ where $\tau$ is the
$L^q$-spectrum of the measure $m$ allow us to
derive more precise comparisons beetwen $m$ and generalized Hausdorff
or packing measures. In particular, logarithm or iterated logarithm
corrections may be obtained in some situations.

\vskip 0.5cm
The paper is organised as follows. In Section \ref{bernoulli}, 
we study the case of
Bernoulli products and give fine comparisons with Hausdorff type or packing
type measures. 
In particular such a Bernoulli product $m$ is singular with respect the the 
Hausdorff measure ${\mathcal H}^d$ but absolutely continuous 
with respect to the packing
measure $\widehat{\mathcal P}^d$ where $d$ is the dimension of the measure
$m$. This elementary fact, which is a consequence
of the Law of the Iterated Logarithm seems not to be very present in the
litterature. 
The motivation in studying such a toy example is
that it contains the fundamental ideas that will be developped in the next
sections. 

In Section \ref{secself} we generalize the results of Section \ref{bernoulli}
to the case of self-similar
measures like (\ref{selfsimilar}), when the Open Set Condition is satisfied. 

Finally, the last
section is devoted to the study of quasi-Bernoulli measures. Of
course, in such a general situation we do not have independance properties
which allow to use the Law of the Iterated Logarithm. Nevertheless, in a large
situation we obtain upper 
bounds of type LIL (Section \ref{seclil}) and we prove that fine comparisons
between the measure $m$ and Hausdorff (or packing) type measures in the
reverse sens are strongly
related to the behaviour of the $L^q$-spactrum near the point $q=1$ (Sections
\ref{secreverse} and \ref{secmore}). In particular, in much situations, a
quasi-Bernoulli measure is already singular with respect to the Hausdorff
measure ${\mathcal H}^{\dim(m)}$ but absolutely continuous with respect to the
packing measure $\widehat{\mathcal P}^{\dim(m)}$.
\section{Bernoulli products}\label{bernoulli}
 We begin by the study of a classical example.   Let  
$({\mathcal F}_n)_{n\geq 0}$ be the family of  $\ell$-adic 
cubes of the $n^{\text{th}}$ generation on $[0,1)^D$, in other words :
\begin{equation}\label{fn}
 {\mathcal F}_n=\left\{I=\displaystyle\prod_{i=1}^D\left[k_i/\ell^n, 
(k_i+1)/\ell^n\right); ~~0\leq k_i<\ell^n \right\}.
\end{equation}
Suppose for simplicity  $D=1$ and $\ell=2$. Let $m$ be the Bernoulli 
product on $[0,1)$ with parameter  $0<p<1$. 
 
It is defined as 
follows. If $\varepsilon_1\cdots \varepsilon_n$ are integers
in $\{0,1\}$, and if
$$I_{\varepsilon_1\cdots \varepsilon_n}= \left[
\sum_{i=1}^n\frac{\varepsilon_i}{2^i},
\sum_{i=1}^n\frac{\varepsilon_i}{2^i}+\frac{1}{2^n}\right)\in\f
n$$ then
$$m\left(I_{\varepsilon_1\cdots \varepsilon_n}\right)=p^{s_n}(1-p)^{n-s_n},
\quad\mbox{where}\quad s_n=\varepsilon_1+\cdots +\varepsilon_n\ .$$

It is well known that the measure $m$ is unidimensional with dimension

$$d=-p\log_2p-(1-p)\log_2(1-p)\ .$$ 

This is an easy consequence of the strong law
of large numbers applied to the sequence of independent Bernoulli random
variables $(\e_n)_{n\ge 1}$. Here, the space
$[0,1)$ is equipped with the probability measure $m$ (see for example
\cite{Fal97} or \cite{Ya07}).

It is then natural to think that the Law of the Iterated Logarithm 
gives a more precise result. Curiously this
elementary fact is not very present in the litterature. Let us only mention
\cite{Smo} in which the law of the iterated logarithm is used in a weak form
in order to prove that the measure $m$ is singular with respect 
to the Hausdorff measure ${\mathcal H}^d$.

\begin{proposition}\label{propobernou}
\label{pro1}Let $m$ be a Bernoulli product with parameter
  $0<p<1$ satisfying $p\not= 1/2$. Take
$$d=-p\log_2p-(1-p)\log_2(1-p)\quad\mbox{and}\quad
\sigma^2=p(1-p)\left(\log_2(\frac{p}{1-p})\right)^2$$
and denote  $$\Theta(t)=2^{\sqrt{2\log_2 (1/t)\ln\ln\log_2 (1/t)}}\ .$$
For all $\varepsilon>0$ we have :

\begin{enumerate}
\item $m\ll  {\mathcal H}^\Psi$, where $\Psi(t)=t^d\Theta(t)^{\sigma+\varepsilon}$
\item $m\bot  {\mathcal H}^\Psi$, where 
$\Psi(t)=t^d\Theta(t)^{\sigma-\varepsilon}$. 
\end{enumerate}
In particular, $\dim_*(m)=\dim^*(m)=d$ but $m\bot{\mathcal H}^d$.
\end{proposition}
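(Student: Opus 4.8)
The plan is to express the measure of a dyadic interval in terms of the sum $s_n$ of the first $n$ digits, and then apply the classical Law of the Iterated Logarithm to the i.i.d. Bernoulli variables $(\e_n)$.

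Let me set up the framework. For a point $x\in[0,1)$ with dyadic expansion $\e_1\e_2\cdots$, write $I_n(x)=I_{\e_1\cdots\e_n}$ for the dyadic interval of generation $n$ containing $x$. Then $|I_n(x)|=2^{-n}$ and
$$m(I_n(x))=p^{s_n}(1-p)^{n-s_n}.$$
Taking $\log_2$, I get $\log_2 m(I_n(x)) = s_n\log_2 p + (n-s_n)\log_2(1-p)$. Writing $s_n=\sum_{i=1}^n\e_i$ and recalling that $\mathbb{E}[\e_i]=p$ under $m$, I center: set $X_i=\e_i-p$, so $X_i$ are i.i.d. with mean $0$ and variance $p(1-p)$. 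A short computation gives
$$\log_2 m(I_n(x)) = -nd + \Big(\log_2\tfrac{p}{1-p}\Big)\sum_{i=1}^n X_i,$$
where $d$ is the stated dimension. So the local behavior of $m$ is governed by the random walk $\sum X_i$, whose fluctuations are exactly what the LIL controls.

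Now let me connect this to the comparison with $\H^\Psi$. The key point is the standard density criteria for absolute continuity and singularity with respect to a generalized Hausdorff measure $\H^\Psi$: roughly, $m\ll\H^\Psi$ if $\limsup_n m(I_n(x))/\Psi(|I_n(x)|)<\infty$ for $m$-a.e.\ $x$, while $m\bot\H^\Psi$ if this ratio is $+\infty$ $m$-a.e. Computing the ratio with $\Psi(t)=t^d\Theta(t)^\beta$ and $t=2^{-n}$, I find $\Theta(2^{-n})=2^{\sqrt{2n\ln\ln n}}$, so that
$$\log_2\frac{m(I_n(x))}{\Psi(2^{-n})} = \Big(\log_2\tfrac{p}{1-p}\Big)\sum_{i=1}^n X_i - \beta\sqrt{2n\ln\ln n}.$$
By the classical LIL, $\limsup_n \big(\sum_{i=1}^n X_i\big)/\sqrt{2n\ln\ln n}=\sigma'$ and $\liminf=-\sigma'$ almost surely, where $\sigma'=\sqrt{p(1-p)}$ is the standard deviation of $X_i$. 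Since $\sigma=|\log_2\frac{p}{1-p}|\sqrt{p(1-p)}$, the coefficient $|\log_2\frac{p}{1-p}|\sigma'$ equals exactly $\sigma$. Thus with $\beta=\sigma+\e$ the expression tends to $-\infty$ along the worst subsequence, forcing the ratio to stay bounded and giving $m\ll\H^\Psi$; with $\beta=\sigma-\e$ the $\limsup$ is $+\infty$, giving $m\bot\H^\Psi$. The case $p\neq1/2$ ensures $\log_2\frac{p}{1-p}\neq0$ so $\sigma>0$ and the two regimes are genuinely different.

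The main obstacle I expect is not the probabilistic core — the LIL does all the heavy lifting — but the measure-theoretic bridge: one must justify passing from the pointwise LIL control of $m(I_n(x))/\Psi(|I_n(x)|)$ along \emph{dyadic} intervals to genuine statements about $\H^\Psi$ defined via arbitrary coverings. This requires a density-type comparison lemma showing that the $\Psi$-dimensional Hausdorff measure can be tested on dyadic cubes (using the nested structure and bounded overlap of the net $\f n$), so that the almost-sure asymptotics of the dyadic density ratio translate into the absolute continuity and singularity assertions. The final conclusion $\dim_*(m)=\dim^*(m)=d$ but $m\bot\H^d$ then follows immediately: letting $\e\to0$ in both parts pins the dimension at $d$, while part (2) with the logarithmic correction $\Theta(t)^{\sigma-\e}$ (which is $o(1)$ relative to any power of $t$) already shows $m$ is singular with respect to $\H^d$ itself, since $\Theta(t)\to\infty$ slower than $t^{-\eta}$ for every $\eta>0$.
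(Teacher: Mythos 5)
Your proof is correct and follows essentially the same route as the paper's: both reduce $-\log_2 m(I_n(x))$ to a sum of i.i.d.\ random variables (your centering of the digits $\e_i$ is just an affine reparametrization of the paper's increments $X_n=-\log_2\bigl(m(I_n(x))/m(I_{n-1}(x))\bigr)$, with the same resulting constant $\sigma$), apply the classical Law of the Iterated Logarithm, and then invoke the standard net-measure density lemmas to convert the almost-sure dyadic estimates into absolute continuity and singularity with respect to ${\mathcal H}^\Psi$. The measure-theoretic bridge you flag is exactly the step the paper also delegates to the classical comparison results (citing Mattila), so no further comment is needed.
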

\begin{remark}
Bernoulli products are particular cases of the 
self-similar measures described in (\ref{selfsimilar}). 
We have $k=2$, $S_1(x)=x/2$, $S_2(x)=(1+x)/2$, and
$\texttt{\bf p}=(1-p,p)$. The self-similar compact set $K$ is the unit interval
$[0,1]$ with dimension $1$.  
The case $p=1/2$ corresponds to the Lebesgue measure :
it is the natural self-similar measure on the compact set $K$. 
In the other cases,
the measure $m$ is singular with respect to 
the Hausdorff measure ${\mathcal H}^{\dim(m)}$. 
\end{remark}
\begin{remark}
In a famous paper (\cite{Mak}), Makorov proved that the harmonic measure of a 
Jordan domain is unidimensional with dimension 1 and obtained similar
iterated logarithm corrections.
\end{remark}
\begin{proof}[Proof of Proposition \ref{propobernou}]
We first prove $(1)$. Let 
$$X_n(x)=-\log_2\left(p^{\e_n}(1-p)^{1-\e_n}\right)=-\log_2
\left(\frac{m(I_n(x))}{m(I_{n-1}(x))}\right)$$
where $I_n(x)$ denotes the unique interval in ${\mathcal F}_n$ containing $x$ 
and $I_n(x)=I_{\varepsilon_1\cdots \varepsilon_n}$. The random variables
$X_n$ are independant and identiquely distributed. An easy
calculation gives  
$${\mathbb E}[X_n]=d\quad \text{and}\quad {\mathbb V}[X_n]=\sigma^2$$ 
where $d$ and $\sigma$ are the quantities introduced in the proposition. 
Derive from the Law of the Iterated Logarithm that, $dm$-almost surely,
\begin{equation}
\label{eq0}
\displaystyle\liminf_{n\rightarrow+\infty} 
\displaystyle\frac{S_n-nd}{\sqrt{2n\ln\ln n}}= -\sigma
\end{equation}
where $S_n=X_1+\cdots  +X_n$. 

Set $\varepsilon>0$. 
Then, $dm$-almost surely,
$$
\exists n_0\in \N^*\ ;\quad\forall n
\geq n_0,\quad S_n\ge nd-(\sigma +\varepsilon)\sqrt{2n\ln\ln n}\ .
$$
Furthermore, $S_n=-\log_2(m(I_n(x)))$. 
Therefore, we get 
$$
\text{a.s,} \hskip 1cm \exists n_0\in \N^*\ ;\quad \forall n
\geq n_0,\quad 
m(I_n(x))\leq \abs{I_n(x)}^d\, \Theta(\abs{I_n(x)})^{\sigma+\varepsilon}\ .
$$
where $\abs{I_n(x)}=2^{-n}$ is the length of the interval $I_n(x)$.
It is classical  to deduce (see for example \cite{Mat}) that for  every set $E$,
$$m(E\cap\liminf_n B_n)\le {\mathcal H}^\Psi(E)$$
where $B_n=\{x\ ;\ m(I_n(x))\leq \Psi(\abs{I_n(x)}) \}$ and  
$\Psi(t)=t^d\Theta(t)^{\sigma+\varepsilon}$. Moreover, the measure $m$ is
carried by the set $\liminf_n B_n$ and the result yields.

We now prove $(2)$. Let $\e>0$. A consequence of (\ref{eq0}) is also that
$dm$-almost surely, 
$$
\label{eq01}
\forall n_0\in\N^*,\ \exists n\ge n_0\ ;\quad S_n\le nd+(-\sigma +\varepsilon)
\sqrt{2n\ln\ln n}
$$
and we get,
$$
\text{a.s},\quad \text{i.o,}\quad 
m(I_n(x))\ge \abs{I_n(x)}^d\, \Theta(\abs{I_n(x)})^{\sigma-\varepsilon}\ .
$$
The full measure set $E_0$ which is just described satisfies ${\mathcal
  H}^\Psi(E_0)<+\infty$ where $\Psi(t)=t^d\Theta(t)^{\sigma-\varepsilon}$. Using
that $\e$ is arbitrary small, we can deduce that $m$ is singular with respect
to ${\mathcal H}^\Psi$ for every $\e>0$. 

In particular, $t^d\ll \Psi(t)$ when $t$ goes to 0, so
that $m\bot {\mathcal H}^d$ 
and the proof of Proposition \ref{pro1} is complete.
\end{proof}

The Law of the Iterated Logarithm also says that $dm$-almost surely,

\begin{equation}
\label{eq02}
\displaystyle\limsup_{n\rightarrow+\infty} 
\displaystyle\frac{S_n-nd}{\sqrt{2n\ln\ln n}}= \sigma\ .
\end{equation}
This asymptotic behavior is deeply related to
comparisons between the measure $m$ and packing measures. That is what is
shown in the following twin proposition.

\begin{proposition}\label{pro2}The notations are the same as in Proposition
  \ref{pro1}.
For all $\varepsilon>0$ we have :

\begin{enumerate}
\item $m\ll  \widehat{{\mathcal P}}^\Psi$, where 
$\Psi(t)=t^d\Theta(t)^{-(\sigma-\varepsilon)}$ 
\item $m\bot  \widehat{{\mathcal P}}^\Psi$, where 
$\Psi(t)=t^d\Theta(t)^{-(\sigma+\varepsilon)}$.
\end{enumerate}
In particular, $\Dim_*(m)=\Dim^*(m)=d$ and  $m\ll\widehat{{\mathcal
    P}}^d$.\\
More precisely, $\widehat{{\mathcal P}}^d(E)<+\infty\Rightarrow m(E)=0$.
\end{proposition}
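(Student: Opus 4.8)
The plan is to run the argument of Proposition~\ref{pro1} again, but with the \emph{limsup} relation (\ref{eq02}) in place of the \emph{liminf} relation (\ref{eq0}), and with the Hausdorff comparison principle replaced by its packing counterpart, in which the roles of the upper and lower $\Psi$-densities are exchanged. Writing as before $S_n=-\log_2 m(I_n(x))$, $\abs{I_n(x)}=2^{-n}$ and recording that $\log_2\Theta(2^{-n})=\sqrt{2n\ln\ln n}$, I would first isolate the single identity
$$
\frac{m(I_n(x))}{\Psi(\abs{I_n(x)})}=2^{\,c_\varepsilon\sqrt{2n\ln\ln n}-(S_n-nd)},
$$
with $c_\varepsilon=\sigma-\varepsilon$ in part $(1)$ and $c_\varepsilon=\sigma+\varepsilon$ in part $(2)$. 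The exponent equals $\sqrt{2n\ln\ln n}\,(c_\varepsilon-(S_n-nd)/\sqrt{2n\ln\ln n})$, so its sign, hence the whole behaviour, is dictated by the position of the normalized sum relative to $c_\varepsilon$, and everything reduces to reading off (\ref{eq02}).

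For part $(2)$, relation (\ref{eq02}) gives $\limsup_n (S_n-nd)/\sqrt{2n\ln\ln n}=\sigma<\sigma+\varepsilon$, so $dm$-almost surely the exponent is eventually $\ge(\varepsilon/2)\sqrt{2n\ln\ln n}$ and the ratio tends to $+\infty$. Since a ball $B(x,r)$ with $2^{-n-1}\le r<2^{-n}$ contains the concentric cube $I_{n+1}(x)$ while $\Psi(2r)\le\Psi(2^{-n+1})$, and since $\Psi(2^{-n-1})/\Psi(2^{-n+1})$ stays bounded below (the gauge $\Psi(t)/t^d$ is slowly varying), the \emph{lower} $\Psi$-density $\liminf_{r\to0}m(B(x,r))/\Psi(2r)$ is then also infinite $m$-almost everywhere. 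The packing density principle (the lower-density analogue of the estimate used for Proposition~\ref{pro1}, see \cite{Mat} or Tricot \cite{tricot}) would then yield, on the full-measure set $E_0$ where this holds and for every $\lambda<\infty$, the bound $\widehat{\mathcal P}^\Psi(E_0)\le C\lambda^{-1}m(E_0)$; letting $\lambda\to\infty$ gives $\widehat{\mathcal P}^\Psi(E_0)=0$ together with $m(E_0)=1$, that is $m\bot\widehat{\mathcal P}^\Psi$.

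For part $(1)$, relation (\ref{eq02}) gives that the normalized sum exceeds $\sigma-\varepsilon$ for infinitely many $n$, hence $m$ is carried by $\limsup_n C_n$ where $C_n=\{x\,;\,m(I_n(x))\le\Psi(\abs{I_n(x)})\}$; this forces the lower $\Psi$-density to be finite $m$-almost everywhere, and the same packing principle, read as $m(E)\le C\lambda\,\widehat{\mathcal P}^\Psi(E)$ on the set where the lower density is $\le\lambda$, gives $\widehat{\mathcal P}^\Psi(E)=0\Rightarrow m(E)=0$. Here lies the one genuinely technical point, the place I expect to spend the most effort: the density principles are naturally stated for Euclidean balls, whereas the Law of the Iterated Logarithm only controls the dyadic masses $m(I_n(x))$. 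In the singular direction this passage is free because a ball \emph{contains} the concentric cube, but in the present (absolutely continuous) direction one needs an \emph{upper} bound on $m(B(x,2^{-n}))$, and the two dyadic neighbours of $I_n(x)$ carry a priori uncontrolled mass. I would handle this by selecting, among the infinitely many good scales, those at which $x$ lies in the central part of $I_{n-2}(x)$, so that $B(x,2^{-n})\subseteq I_{n-2}(x)$ and, since consecutive cube ratios are bounded by $\min(p,1-p)^{-1}$, $m(B(x,2^{-n}))\le\min(p,1-p)^{-2}\,m(I_n(x))\le\min(p,1-p)^{-2}\,\Psi(2^{-n})$. The centrality event depends only on the digits $(\varepsilon_{n-1},\varepsilon_n)$ and occurs with fixed positive probability, so that a Borel--Cantelli type argument, or, more cleanly, a dyadic net-packing comparison valid for the doubling measure $m$, should produce the required scales. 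This is the delicate step I would write out in full.

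Finally, the two ``in particular'' assertions follow formally from part $(1)$ by elementary gauge comparisons. For small $\varepsilon>0$ one has $\Theta(t)^{-(\sigma-\varepsilon)}\le 1$ near $0$, so $\Psi(t)\le t^d$, whence $\widehat{\mathcal P}^\Psi\le\widehat{\mathcal P}^d$ and $m\ll\widehat{\mathcal P}^d$; and since $\Psi(t)/t^d=\Theta(t)^{-(\sigma-\varepsilon)}\to0$, every set $E$ with $\widehat{\mathcal P}^d(E)<+\infty$ satisfies $\widehat{\mathcal P}^\Psi(E)=0$, hence $m(E)=0$ by part $(1)$, which is the sharper statement. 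The equalities $\Dim_*(m)=\Dim^*(m)=d$ then drop out by comparing with the pure power gauges $t^{d\pm\varepsilon}$: part $(1)$ gives $\Dim_*(m)\ge d$, while part $(2)$ gives $m\bot\widehat{\mathcal P}^s$ for every $s>d$ (as $t^s/\Psi\to0$ there), hence $\Dim^*(m)\le d$.
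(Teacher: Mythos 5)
Your proposal follows the paper's route: part (2) from the eventual upper bound $S_n\le nd+(\sigma+\varepsilon)\sqrt{2n\ln\ln n}$, part (1) from the infinitely-often lower bound contained in (\ref{eq02}), then the two packing density comparisons, and the ``in particular'' statements by gauge comparison exactly as in the paper. For part (2) the paper proceeds in a formally different but equivalent way: it bounds the packing \emph{pre-measure} of $E_{n_0}=\{x:\forall n\ge n_0,\ m(I_n(x))\ge\psi_\varepsilon(2^{-n})\}$ by $1$, upgrades this to $\widehat{\mathcal P}^{\psi_{2\varepsilon}}(E_{n_0})=0$ because $\psi_{2\varepsilon}\ll\psi_\varepsilon$, and sums over $n_0$; your observation that the lower density is in fact infinite yields the same conclusion. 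The only place you genuinely depart from the paper is the ball-versus-dyadic-cube issue in part (1): the paper simply asserts $m(E\cap\limsup_nB_n)\le\widehat{\mathcal P}^\Psi(E)$ ``with similar arguments, using packings instead of coverings'', i.e.\ it takes the dyadic-net form of the packing density theorem as classical, whereas you insist on the ball form and therefore need an upper bound on $m(B(x,2^{-n}))$, which a dyadic neighbour of $I_n(x)$ could spoil. Your repair is the right idea but, as written, incomplete in a way worth being precise about: the LIL-good scales are random, so you cannot invoke independence of the centrality digits directly; you need the conditional (L\'evy) Borel--Cantelli lemma along the stopping times $T_k=\inf\{n>T_{k-1}+2:\ S_n-nd\ge(\sigma-\varepsilon)\sqrt{2n\ln\ln n}\}$, for which $m\bigl((\e_{T_k+1},\e_{T_k+2})\in\{(0,1),(1,0)\}\mid{\mathcal F}_{T_k}\bigr)=2p(1-p)$, and then on these events $B(x,2^{-(T_k+2)})\subset I_{T_k}(x)$ with $m(I_{T_k}(x))\le\min(p,1-p)^{-2}m(I_{T_k+2}(x))$ as you say. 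Also, drop the parenthetical appeal to ``the doubling measure $m$'': for $p\ne1/2$ the Bernoulli product is \emph{not} doubling as a measure on $[0,1)$ (compare the masses of the two dyadic neighbours of $1/2$ at generation $n$); what your argument actually uses is only the bounded parent-to-child mass ratio. With these two adjustments your proof is complete and, if anything, more careful than the paper's on this one technical point.
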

\begin{proof}
The relation (\ref{eq02}) implies that $dm$-almost surely
$$
\forall n_0\in\N^*,\ \exists n\ge n_0\ ;\quad  S_n\ge nd  
+(\sigma -\varepsilon)\sqrt{2n\ln\ln n}\ .
$$
So,
$$
\text{a.s,}\quad\text{i.o.,}\quad  
m(I_n(x))\le \abs{I_n(x)}^d\Theta(\abs{I_n(x)})^{-(\sigma-\varepsilon)}
$$
and assumption $(1)$ holds with similar arguments as in Proposition \ref{pro1}
(using packing instead of coverings). More precisely, for every set $E$, one
has
$$m(E\cap\limsup_n B_n)\le \widehat{\mathcal P}^\Psi(E)$$
where $B_n=\{x\ ;\ m(I_n(x))\leq \Psi(\abs{I_n(x)}) \}$,  
$\Psi(t)=t^d\Theta(t)^{-(\sigma-\varepsilon)}$ and the sets $B_n$ are such
that $m\left(\limsup_n B_n\right)=1$.

In particular, $t^d\Theta(t)^{-(\sigma-\varepsilon)}\le t^d$ so that
$m\ll\widehat{{\mathcal P}}^d$. More precisely, $t^d
\Theta(t)^{-(\sigma-\varepsilon)}\ll t^d$ when $t$ goes to 0. If $E$ is a set
such that $\widehat{{\mathcal P}}^d(E)<+\infty$, we have successively
$\widehat{{\mathcal P}}^\Psi(E)=0$ and $m(E)=0$.

On the other hand, we have $dm$-almost surely
$$
\exists n_0\in \N\ ;\quad\forall n\geq n_0,\quad S_n\le nd 
+(\sigma +\varepsilon)\sqrt{2n\ln\ln n}
$$
which says that 
$$\text{a.s,} \hskip 1cm \exists n_0\in \N\ ;\quad\forall n
\geq n_0,\quad 
m(I_n(x))\geq \abs{I_n(x)}^d\Theta(\abs{I_n(x)})^{-(\sigma+\varepsilon)}\ .
$$
Let 
$$E_{n_0}=\left\{x\in [0,1)\ ;\ \quad\forall n
\geq n_0,\quad 
m(I_n(x))\geq
\abs{I_n(x)}^d\Theta(\abs{I_n(x)})^{-(\sigma+\varepsilon)}\right\}\ .$$
It is clear that ${\mathcal P}^{\psi_\e}(E_{n_0})\le 1$ where 
$\psi_\e(t)=t^d\theta(t)^{-(\sigma+\e)}$ and 
${\mathcal P}^{\psi_\e}$ is
  the pre-measure related to the packing measure 
$\widehat{\mathcal P}^{\psi_\e}$ (see \cite{Fa} for
  the link between ${\mathcal P}^{\psi_\e}$ and  $\widehat{\mathcal
    P}^{\psi_\e}$). It
  follows that ${\mathcal P}^{\psi_{2\e}}(E_{n_0})=0$ and 
$\widehat{\mathcal P}^{\psi_{2\e}}\left(\bigcup_{n_0}
  E_{n_0}\right)\le\sum_{n_0}{\mathcal P}^{\psi_{2\e}}(E_{n_0})=0$. Moreover,
$m\left(\bigcup_{n_0}E_{n_0}\right)=1$.  
This implies $(2)$ ($\e$ is arbitrary small).
\end{proof}
\section{A more general situation : self-similar measures}\label{secself}
The results established in the previous section are particular cases of the
more general situation of self-similar measures. We can prove the following
general theorem.

\begin{theorem}\label{thself}
Let $K$ be the attractor of a family of similarity transformations
$S_1,\cdots ,S_k$  in $\R^D$ where $S_i$ has similarity  ratio
$0<r_i<1$. Suppose that the Open Set Condition is satisfied and let
$\delta=\dim (K)$ be the Hausdorff dimension of $K$. Recall that $\delta$ is
the unique positive solution of the equation $\sum_{i=1}^k r_i^\delta=1$.

Let $\texttt{\bf p}=(p_1,\cdots ,p_k)$ be 
a probability vector and $m$  be the self-similar 
probability measure such that 
$$
m=\sum_{i=1}^kp_i\,m\circ S_i^{-1}\ .
$$
Set
$$
d=\frac{\sum_{i=1}^kp_i\ln
  p_i}{\sum_{i=1}^kp_i\ln r_i}\quad\text{and}\quad
\sigma^2=\frac{\sum_{i=1}^kp_i(\ln p_i-d\,\ln r_i)^2}{-\sum_{i=1}^kp_i\ln r_i}\ .
$$
Suppose that $d\not=\delta$ (which is
equivalent to $\sigma>0$) and let
$$\theta(t)=e^{\sqrt{2\ln(1/t)\ln\ln\ln(1/t)}}\ .$$
For all $\e>0$ we have :
\begin{enumerate}
\item $m\ll  {\mathcal H}^\Psi$, where $\Psi(t)=t^d\Theta(t)^{\sigma+\varepsilon}$
\item $m\bot  {\mathcal H}^\Psi$, where 
$\Psi(t)=t^d\Theta(t)^{\sigma-\varepsilon}$. 
\end{enumerate}
In particular, $m\bot{\mathcal H}^d$.
\end{theorem}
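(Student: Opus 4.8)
The plan is to reduce the statement to the i.i.d.\ situation of Proposition~\ref{propobernou} through the symbolic coding provided by the Open Set Condition, and then to apply the Law of the Iterated Logarithm after a change of the time parameter from the generation index $n$ to the true scale $t=\abs{I_n}$. Under the OSC the compositions $S_{\omega_1\cdots\omega_n}=S_{\omega_1}\circ\cdots\circ S_{\omega_n}$ produce cylinders $K_\omega=S_\omega(K)$ of bounded overlap, and $m$ is the image of the Bernoulli measure $\texttt{\bf p}^{\otimes\N}$ on $\{1,\dots,k\}^\N$ under the coding map; moreover $m(K_\omega)=p_{\omega_1}\cdots p_{\omega_n}$ and $\abs{K_\omega}=\abs K\,r_{\omega_1}\cdots r_{\omega_n}$. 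Writing $I_n(x)$ for the generation-$n$ cylinder containing $x$, I introduce the i.i.d.\ variables $Y_j=\ln p_{\omega_j}-d\,\ln r_{\omega_j}$ and $Z_j=-\ln r_{\omega_j}$. The definition of $d$ forces ${\mathbb E}[Y_j]=\sum_i p_i(\ln p_i-d\ln r_i)=0$, while ${\mathbb V}[Y_j]=\sum_i p_i(\ln p_i-d\ln r_i)^2=:v$ and ${\mathbb E}[Z_j]=-\sum_i p_i\ln r_i=:\lambda>0$. Putting $S_n=Y_1+\cdots+Y_n$ and $T_n=Z_1+\cdots+Z_n$, one has $\ln m(I_n(x))-d\ln\abs{I_n(x)}=S_n-d\ln\abs K$ and $\ln(1/\abs{I_n(x)})=T_n-\ln\abs K$.

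The heart of the argument is the change of clock. The LIL applied to $(Y_j)$ gives $\limsup_n S_n/\sqrt{2n\ln\ln n}=\sqrt v$ almost surely, while the strong law applied to $(Z_j)$ gives $T_n/n\to\lambda$. From the latter and the definition of $\theta$ I would derive the almost sure asymptotic
$$\ln\theta(\abs{I_n(x)})\ \sim\ \sqrt{2\,T_n\,\ln\ln T_n}\ \sim\ \sqrt\lambda\,\sqrt{2n\ln\ln n},$$
so that the deterministic factor $\sqrt{2n\ln\ln n}/\ln\theta(\abs{I_n})$ converges to $\lambda^{-1/2}$. Combining this with the LIL (the constant shift $d\ln\abs K$ being negligible) produces the single scale-indexed statement
$$\limsup_{n\to+\infty}\frac{\ln m(I_n(x))-d\,\ln\abs{I_n(x)}}{\ln\theta(\abs{I_n(x)})}=\frac{\sqrt v}{\sqrt\lambda}=\sigma\qquad m\text{-a.s.},$$
which is the exact analogue of (\ref{eq02}) and already explains why the variance in $\sigma^2$ carries the normalising factor $\lambda^{-1}=(-\sum_i p_i\ln r_i)^{-1}$. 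That $\sigma>0$ is equivalent to $d\ne\delta$ follows from the Gibbs inequality $\sum_i p_i\ln(p_i/r_i^\delta)\ge 0$, which vanishes precisely when $p_i=r_i^\delta$ for all $i$, i.e.\ when $d=\delta$.

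The two assertions are then deduced exactly as in Proposition~\ref{propobernou}. For $(1)$, the limsup being $\sigma$ gives $m(I_n(x))\le\abs{I_n(x)}^d\theta(\abs{I_n(x)})^{\sigma+\e}=\Psi(\abs{I_n(x)})$ for all large $n$, $m$-a.s.; since $m$ is carried by $\liminf_n B_n$ with $B_n=\{x\,;\,m(I_n(x))\le\Psi(\abs{I_n(x)})\}$, the covering lemma of \cite{Mat} yields $m(E\cap\liminf_n B_n)\le{\mathcal H}^\Psi(E)$ and hence $m\ll{\mathcal H}^\Psi$. For $(2)$, the same limsup gives $m(I_n(x))\ge\abs{I_n(x)}^d\theta(\abs{I_n(x)})^{\sigma-\e}$ infinitely often, so the full-measure set $E_0=\limsup_n B_n$ is covered by cylinders on which $\sum\Psi(\abs{I_n})\le\sum m(I_n)$ stays bounded; thus ${\mathcal H}^\Psi(E_0)<+\infty$, and comparing the gauges for two values of $\e$ gives $m\bot{\mathcal H}^\Psi$ for every $\e>0$, whence $m\bot{\mathcal H}^d$ on letting $\e\to 0$.

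The only genuinely new difficulty with respect to the Bernoulli case is the passage from cylinders to the gauge measures, and it is here that the OSC is indispensable. One must check that estimates written in terms of the cylinders $I_n(x)$ are legitimate for ${\mathcal H}^\Psi$, which requires the bounded-overlap property of the sets $S_\omega(K)$ together with the comparability of coverings by balls and by cylinders of the same diameter (standard under the OSC, see \cite{Fa}). The accompanying technical subtlety is precisely the change of clock: because the ratios $r_i$ differ, the generation index no longer matches the scale, and the asymptotic $\ln\theta(\abs{I_n})\sim\sqrt\lambda\,\sqrt{2n\ln\ln n}$ is what reconciles the $n$-indexed LIL with the $t$-indexed gauge $\theta$.
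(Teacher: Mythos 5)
Your proof is correct and follows essentially the same route as the paper: symbolic coding of $m$ as a push-forward of the Bernoulli measure $\texttt{\bf p}^{\otimes\N}$, the LIL for the i.i.d.\ increments $\ln p_{\omega_j}-d\ln r_{\omega_j}$ combined with the strong law for $-\ln r_{\omega_j}$ to change clock from the generation index to the scale $\abs{I_n(x)}$, and then the standard covering/packing-by-cylinders arguments of Section~\ref{bernoulli}. The only point you treat more lightly than the paper is the non-injectivity of the coding map under the mere Open Set Condition: the facts $m(K_{i_1\cdots i_n})=p_{i_1}\cdots p_{i_n}$ and the $m$-a.e.\ uniqueness of the coding (needed so that the variables $\omega_j$ are well defined on $(K,m)$) are exactly Graf's lemma, which the paper uses to build the full-measure set $K^0$ on which the coding is unambiguous.
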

Recall that the Open Set Condition states that there exists a non-empty and
bounded  open set $U$ in $\R^D$ with  $\bigcup_{i=1}^kS_i(U)\subset U$ and 
$S_i(U)\cap S_j(U)=\emptyset$ for all $i,j$ with $i\not=j$.
\begin{proof}[Proof of Theorem \ref{thself}]
We give the complete proof in the particular case where the strong
separation condition is 
satisfied (i.e. in the case  where the $S_1(K),\cdots ,S_k(K)$ are disjoint
compact sets) and then say a few words in the general case. 

 In the case where the strong separation condition is satisfied, the
 application  

\begin{equation}\label{eqcantor}
\pi\ :\quad i=(i_1,\cdots ,i_n,\cdots )\in\{1,\cdots ,k\}^{\N^*}
\longmapsto\bigcap_nS_{i_1}
\circ\cdots \circ S_{i_n}(K)
\end{equation}
is an homeomorphism between the symbolic Cantor set $\{1,\cdots ,k\}^{\N^*}$
and the self-similar set $K$. Moreover, the measure
 $m$ is nothing else but the image of a multinomial measure on 
$\{1,\cdots ,k\}^{\N^*}$ through this homeomorphism. 
Let
$$K_{i_1\cdots i_n}=S_{i_1}\circ\cdots \circ S_{i_n}(K)\ .$$
For every $x\in K$ there exists a unique sequence
$i_1(x),\cdots ,i_n(x),\cdots $ such that $x\in K_{i_1(x)\cdots i_n(x)}$ for
all $n$. Moreover, the random variables $i_1,\cdots ,i_n,\cdots $ are
independant and uniformly distributed with distribution
$$m(\{i_n=i\})=p_i\quad \forall i\in\{1,\cdots  k\}\ .$$
Set 
$$K_n(x)=K_{i_1(x)\cdots i_n(x)}\quad\text{and}\quad R_n(x)=\abs{K_n(x)}$$
where $\abs{A}$ denotes the diameter of the set $A$.

We may suppose without lost of generality that $\abs{K}=1$ and we define for
every $n\ge 1$ the random variable 
$$S_n(x)=-\ln(m(K_n(x)))+d\,\ln(R_n(x))\quad\text{and}\quad X_n=S_n-S_{n-1}$$
with the convention $S_0=0$.

The random variables $X_n$ are independant, uniformly distributed and take the
value $-\ln p_i+d\,\ln r_i$ with probability $p_i$. An easy calculation gives
$${\mathbb E}[X_n]=0\quad\text{and}\quad{\mathbb V}[X_n]=\sum_{i=1}^kp_i\left(
  \ln p_i-d\,\ln r_i\right)^2=\left(-\sum_{i=1}^kp_i\ln r_i\right)\sigma^2\
.$$ 
The Law of the Iterated Logarithm states that almost surely,
\begin{equation}\label{LIL}
\liminf_{n\to +\infty}\frac{S_n}{\sqrt{2n\ln\ln n}}=-\left(-\sum_{i=1}^kp_i\ln
    r_i\right)^{1/2}\sigma\ .
\end{equation}
On the other hand, $\ln R_n=\rho_1+\cdots +\rho_n$ where the $\rho_j$ 
are independant,
uniformly distributed and such that for all $i\in\{1,\cdots ,k\}$, 
$m(\{\rho_n=\ln r_i\})=p_i$. The strong law of large numbers says that almost
surely, 
\begin{equation}\label{LGN}
\lim_{n\to+\infty}\frac{\ln R_n}n=\sum_{i=1}^kp_i\ln r_i\ .
\end{equation}
Combining (\ref{LIL}) and (\ref{LGN}), we deduce that $dm$-almost surely, 
\begin{equation}\label{liminf}
\liminf_{n\to+\infty}\frac{- \ln
(m(K_n(x)))+d\,\ln(\abs{K_n(x)})}{\sqrt{2\ln\left(\abs{K_n(x)}^{-1}\right)\ln\ln\ln\left(\abs{K_n(x)}^{-1}\right)}}
=-\sigma\ . 
\end{equation}
Let $\e>0$. Using the notation introduced in the theorem, we conclude that
almost surely
$$
\begin{cases}
\exists n_0\ ;\ \forall n\ge n_0,\quad 
m(K_n(x))\le \abs{K_n(x)}^d\theta(\abs{K_n(x)})^{\sigma+\e}\\
\null\\
\forall n_0\ ;\ \exists n\ge n_0,\quad 
m(K_n(x))\ge \abs{K_n(x)}^d\theta(\abs{K_n(x)})^{\sigma-\e}
\end{cases}
$$
The size of the $K_n(x)$ are exponentialy decreasing in the sense that
$$\min_{1\le i\le k}(r_i)\,\abs{K_n(x)}\le \abs{K_{n+1}(x)}\le\max_{1\le i\le
  k}(r_i)
\,\abs{K_n(x)}\ .$$
It is then well known that Hausdorff measures of subsets of $K$ 
computed with coverings using the $K_n(x)$ are comparable to the genuine ones. 
In the same way as in Section
\ref{bernoulli}, we can then conclude that for all $\e>0$,
$$
\begin{cases}
m\ll  {\mathcal H}^\Psi,\quad \text{where}\ 
\Psi(t)=t^d\Theta(t)^{\sigma+\varepsilon}\\
\null\\
m\bot  {\mathcal H}^\Psi,\quad \text{where}\  
\Psi(t)=t^d\Theta(t)^{\sigma-\varepsilon}
\end{cases}
$$
and the proof is finished in the case where the strong separation condition is
satisfied.

In the general case we have to adapt the argument. The difficulty is that the
function $\pi$ defined in (\ref{eqcantor}) is always surjective but not one to
one. We will use the following
lemma which was proved by Graf in \cite{Graf}.

\begin{lemma}\label{OSC}{\em (\cite{Graf})}
The notations are the same as in Theorem \ref{thself}. Under the Open Set
Condition we have :
$$\forall (i_1,\cdots ,i_n)\in\{1,\cdots ,k\}^n,\quad
m(K_{i_1\cdots i_n})=p_{i_1}\cdots  p_{i_n}$$
and
$$\text{if}\quad (i_1,\cdots ,i_n)\not=(j_1,\cdots ,j_n)\quad \text{then}\quad 
m\left(K_{i_1\cdots i_n}\cap K_{j_1\cdots j_n}\right)=0\ .$$
\end{lemma}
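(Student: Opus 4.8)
The plan is to transport the problem to the symbolic space and to prove that the coding map $\pi$ of (\ref{eqcantor}) is one-to-one up to an $m$-null set; both identities then follow immediately. Put $\Sigma=\{1,\cdots ,k\}^{\N^*}$, let $\mu$ be the Bernoulli measure on $\Sigma$ with weights $(p_1,\cdots ,p_k)$ (we may assume every $p_i>0$, discarding the other indices otherwise), and for a word $w=(i_1,\cdots ,i_n)$ set $[w]=\{x:x_1\cdots x_n=w\}$, $p_w=p_{i_1}\cdots p_{i_n}$ and $S_w=S_{i_1}\circ\cdots\circ S_{i_n}$, so that $K_w=S_w(K)$. Since $\pi\circ\sigma_i=S_i\circ\pi$, where $\sigma_i$ prepends the letter $i$, and since $\mu=\sum_i p_i\,(\sigma_i)_*\mu$, the measure $\pi_*\mu$ solves the fixed-point equation (\ref{selfsimilar}); by uniqueness $m=\pi_*\mu$. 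Because $[w]\subset\pi^{-1}(K_w)$ with $\mu([w])=p_w$, it suffices to show that $E_w:=\pi^{-1}(K_w)\setminus[w]$ is $\mu$-negligible for every $w$. Indeed $\mu(E_w)=0$ yields $m(K_w)=\mu(\pi^{-1}(K_w))=p_w$; and then, for distinct words $w,v$ of equal length, $\pi^{-1}(K_w\cap K_v)=\pi^{-1}(K_w)\cap\pi^{-1}(K_v)\subset E_w\cup E_v$ since $[w]\cap[v]=\emptyset$, so $m(K_w\cap K_v)=0$.

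Next I would record the two consequences of the Open Set Condition. Since $\bar U$ is compact and $\bigcup_i S_i(\bar U)\subset\bar U$, the decreasing compacta $\bigcup_{|w|=n}S_w(\bar U)$ contract to the attractor, so $K\subset\bar U$ and hence $K_w\subset\overline{S_w(U)}$. Moreover an induction on the length shows that the open sets $S_w(U)$ with $|w|=n$ are pairwise disjoint, and two disjoint open sets satisfy $S_w(U)\cap\overline{S_v(U)}=\emptyset$ (a point of $\overline{S_v(U)}$ sitting in the open set $S_w(U)$ would place points of $S_v(U)$ inside $S_w(U)$). Consequently $S_w(U)\cap K_v=\emptyset$ whenever $w\not=v$ have the same length. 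Finally, by Schief's theorem the Open Set Condition in $\R^D$ is equivalent to its strong form, so I may take $U$ with $U\cap K\not=\emptyset$; then $\eta:=m(U)=\mu(\pi^{-1}(U))>0$ because $K=\supp(m)$.

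The core is a visiting-time argument. Fix $w$ with $|w|=n$ and take $x\in E_w$, so $\pi(x)\in K_w$ while $u:=x_1\cdots x_n\not=w$. If some $m\ge n$ satisfied $\pi(\sigma^m x)\in U$ (with $\sigma$ the shift), then $\pi(x)=S_{x_1\cdots x_m}(\pi(\sigma^m x))\in S_{x_1\cdots x_m}(U)\subset S_u(U)$, which is impossible since $\pi(x)\in K_w\subset\overline{S_w(U)}$ and $S_u(U)\cap\overline{S_w(U)}=\emptyset$. Thus no $x\in E_w$ ever visits $\pi^{-1}(U)$ at a time $\ge n$. On the other hand, as $\mu(\pi^{-1}(U))=\eta>0$ and the Bernoulli shift is ergodic, the Birkhoff theorem guarantees that $\mu$-almost every $x$ visits $\pi^{-1}(U)$ infinitely often, in particular at some time $\ge n$. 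Hence $\mu(E_w)=0$, and the lemma follows.

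The delicate point is the one created by the very failure of $\pi$ to be injective that is noted just before the lemma: under the bare Open Set Condition the cells $K_w$ may overlap along their boundaries, and there is no reason a priori for $m(U)$ to be positive. This is exactly why the reduction to the strong Open Set Condition (Schief) is essential — it provides a cell-set $U$ that genuinely carries mass, so that the visiting-time mechanism can separate $K_w$ from every other level-$n$ cell. The remaining ingredients (the identification $m=\pi_*\mu$, the disjointness of the $S_w(U)$, and the ergodic visiting argument) are routine.
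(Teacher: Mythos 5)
Your proof is correct. Note that the paper does not actually prove this lemma: it is quoted from Graf's paper \cite{Graf} and used as a black box inside the proof of Theorem \ref{thself}, so there is no internal argument to compare against. What you give is a complete, self-contained reconstruction along the standard lines: identify $m$ with $\pi_*\mu$ by uniqueness of the solution of (\ref{selfsimilar}), reduce both identities of the lemma to the single claim that the overlap set $E_w=\pi^{-1}(K_w)\setminus[w]$ is $\mu$-null, and kill $E_w$ by combining Schief's theorem (which upgrades the Open Set Condition to its strong form, so that $m(U)>0$) with ergodicity of the Bernoulli shift: a point of $E_w$ can never enter $\pi^{-1}(U)$ at a time $\ge n$, since that would force $\pi(x)$ into $S_u(U)$, disjoint from $\overline{S_w(U)}\supset K_w$, whereas almost every point visits $\pi^{-1}(U)$ infinitely often. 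Each intermediate step checks out: $K\subset\bar U$ from the contraction of the Hutchinson iterates of $\bar U$, the disjointness $S_u(U)\cap\overline{S_w(U)}=\emptyset$ for distinct words of equal length, the inclusion $\pi^{-1}(K_w\cap K_v)\subset E_w\cup E_v$, and the recurrence argument. Two small remarks. First, your reduction ``we may assume every $p_i>0$'' is slightly glib, since discarding zero-weight maps changes the attractor and the cells $K_w$; in the setting of Theorem \ref{thself} all $p_i$ are implicitly positive, so this is harmless, but a word containing a zero-weight letter would need a separate (easy) argument in general. Second, you trade the citation of Graf for a citation of Schief's OSC--SOSC equivalence, which is itself a nontrivial theorem; the gain is that everything downstream of it is elementary and explicit, which is arguably more informative than the paper's bare reference.
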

We define the following families of subsets of $K$. If $n\ge 1$ and
$(i_1,\cdots ,i_n)\in\{1,\cdots ,k\}^n$, 

$$
K^0_{i_1\cdots i_n}=\left\{x\in K_{i_1\cdots i_n}\ ;\quad \forall
  (j_1,\cdots ,j_n)\not=(i_1,\cdots ,i_n),\quad x\not\in
  K_{j_1\cdots j_n}\right\}\ .
$$ 

It follows from Lemma \ref{OSC} that for every integer  $n\ge 1$ and for every
$(i_1,\cdots ,i_n)\in\{1,\cdots ,k\}^n$, 
$$m(K^0_{i_1\cdots i_n})=p_{i_1}\cdots  p_{i_n}\ .$$
Moreover, the family $K^0_{i_1\cdots i_n}$, where
$(i_1,\cdots ,i_n)\in\{1,\cdots ,k\}^n$, is constituted
of $k^n$ disjoint ${\mathcal G}_\delta$ subsets of $\R^D$ and satisfies :
$$K^0_{i_1\cdots i_nj}\subset K^0_{i_1\cdots i_n}\quad\forall
j\in\{1,\cdots ,k\}\ .$$
Let 
$$K^0=\bigcap_{n\in\N^*}\bigcup_{(i_1,\cdots ,i_n)}
K^0_{i_1\cdots i_n}\ .$$
The set $K^0$ is a ${\mathcal G}_\delta$ subset of $\R^D$ such that
$K^0\subset K$ and $m(K^0)=1$. Moreover for every $x\in K^0$, there exists a
unique sequence $(i_1(x),\cdots ,i_n(x),\cdots )$ such that for every integer
$n\ge 1$,  
$x\in K^0_{i_1(x)\cdots i_n(x)} $. We can extend the applications 
$i_1,\cdots ,i_n,\cdots $ in a
mesurable way and define for every $x\in K$ 
$$K_n(x)=K_{i_1(x)\cdots i_n(x)}$$
such that $x\in K_n(x)$. Moreover the random variables $i_1,\cdots ,i_n,\cdots $
are independant, uniformly distributed and such that $m(\{i_n=i\})=p_i$. We
can already use the Law of the Iterated Logarithm and obtain (\ref{liminf})
which is the key to prove Theorem \ref{thself}.
\end{proof}
\begin{remark}
In the case of Bernoulli products described in Section \ref{bernoulli}, the
sets $K^0_{i_1\cdots i_n}$ are nothing else but the open dyadic intervals of
the $n^{\text{th}}$ generation and $K^0$ is the set of points $x\in [0,1]$
that are not dyadic numbers.
\end{remark} 
\begin{remark}\label{sigma}
It is classical to establish that the $L^q$-spectrum of the measure $m$ 
is given by the implicit
equation 
$$\sum_{i=1}^kp_i^qr_i^{\tau(q)}=1\ .$$
We can refer to \cite{AP} or 
\cite{Fal97} where this formula is obtained and where the link
with multifractal formalism is shown. The function $\tau$ is analytic and an
easy calculation gives $\tau'(1)=-d$ and $\tau''(1)=\sigma^2$. In other words,
$\tau(1-q)=dq+\frac{\sigma^2}2q^2+o(q^2)$ near $q=0$. We will see in Section
\ref{quasibernoulli} that such an estimate is the key to obtain quite similar
results for quasi-Bernoulli measures.
\end{remark}

\vskip 0.5cm
Of course a similar result involving packing measures is also true.
\begin{theorem}\label{thself2}The hypothesis and the notations are the 
same as in Theorem
  \ref{thself}.
For all $\varepsilon>0$ we have :

\begin{enumerate}
\item $m\ll  \widehat{{\mathcal P}}^\Psi$, where 
$\Psi(t)=t^d\Theta(t)^{-(\sigma-\varepsilon)}$ 
\item $m\bot  \widehat{{\mathcal P}}^\Psi$, where 
$\Psi(t)=t^d\Theta(t)^{-(\sigma+\varepsilon)}$.
\end{enumerate}
In particular, $\widehat{{\mathcal P}}^d(E)<+\infty\Rightarrow m(E)=0$.
\end{theorem}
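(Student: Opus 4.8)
The plan is to run the proof of Theorem \ref{thself2} as the exact packing-measure twin of Theorem \ref{thself}, in the same way that Proposition \ref{pro2} is the twin of Proposition \ref{pro1}: I replace the \emph{liminf} half of the Law of the Iterated Logarithm by its \emph{limsup} half, and I replace the Hausdorff covering arguments by packing arguments. All the reduction to sums of i.i.d.\ variables carried out in the proof of Theorem \ref{thself} --- including the passage to the sets $K^0_{i_1\cdots i_n}$ and the use of Graf's Lemma \ref{OSC} in the general Open Set Condition case --- can be reused verbatim, so I may assume that the applications $i_1,i_2,\dots$ are independent with $m(\{i_n=i\})=p_i$ and that $S_n(x)=-\ln(m(K_n(x)))+d\,\ln(R_n(x))$ is a centered random walk with per-step variance $\left(-\sum_i p_i\ln r_i\right)\sigma^2$.

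First I would invoke the limsup half of the LIL, the twin of (\ref{LIL}), namely $dm$-almost surely
\[
\limsup_{n\to+\infty}\frac{S_n}{\sqrt{2n\ln\ln n}}=\left(-\sum_{i=1}^k p_i\ln r_i\right)^{1/2}\sigma .
\]
Combining this with the strong law of large numbers (\ref{LGN}) for $\ln R_n$ exactly as in the derivation of (\ref{liminf}) yields, $dm$-almost surely,
\[
\limsup_{n\to+\infty}\frac{-\ln(m(K_n(x)))+d\,\ln(\abs{K_n(x)})}{\sqrt{2\ln\!\left(\abs{K_n(x)}^{-1}\right)\ln\ln\ln\!\left(\abs{K_n(x)}^{-1}\right)}}=+\sigma .
\]
From this single statement I extract, for fixed $\e>0$, the two almost-sure facts I need: infinitely often $m(K_n(x))\le \abs{K_n(x)}^d\,\theta(\abs{K_n(x)})^{-(\sigma-\e)}$ (the ``$\limsup\ge\sigma$'' direction), which will give $(1)$; and eventually $m(K_n(x))\ge \abs{K_n(x)}^d\,\theta(\abs{K_n(x)})^{-(\sigma+\e)}$ (the ``$\limsup\le\sigma$'' direction), which will give $(2)$.

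For part $(1)$ I would argue as in Proposition \ref{pro2}$(1)$. Setting $B_n=\{x\,;\,m(K_n(x))\le \Psi(\abs{K_n(x)})\}$ with $\Psi(t)=t^d\theta(t)^{-(\sigma-\e)}$, the ``infinitely often'' statement says precisely that $m(\limsup_n B_n)=1$, and the standard packing density estimate gives $m(E\cap\limsup_n B_n)\le \widehat{\mathcal P}^\Psi(E)$ for every set $E$; hence $m\ll\widehat{\mathcal P}^\Psi$. Since $\theta(t)\ge 1$, for $0<\e<\sigma$ one has $t^d\theta(t)^{-(\sigma-\e)}\ll t^d$ as $t\to 0$, so that $\widehat{\mathcal P}^d(E)<+\infty$ forces first $\widehat{\mathcal P}^\Psi(E)=0$ and then $m(E)=0$, which is the last assertion of the theorem. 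For part $(2)$ I would mirror Proposition \ref{pro2}$(2)$: with $\Psi(t)=t^d\theta(t)^{-(\sigma+\e)}$ and the eventual lower bound, the sets $E_{n_0}=\{x\,;\,\forall n\ge n_0,\ m(K_n(x))\ge \Psi(\abs{K_n(x)})\}$ satisfy $\mathcal P^{\psi_\e}(E_{n_0})\le 1$ for the packing premeasure, whence $\mathcal P^{\psi_{2\e}}(E_{n_0})=0$, and by countable stability $\widehat{\mathcal P}^{\psi_{2\e}}\!\left(\bigcup_{n_0}E_{n_0}\right)=0$ while $m\!\left(\bigcup_{n_0}E_{n_0}\right)=1$; letting $\e\to 0$ gives $m\bot\widehat{\mathcal P}^\Psi$.

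The step requiring genuine care is the one already flagged in the Hausdorff proof: the comparison between packing (respectively covering) quantities computed along the natural net $\{K_n(x)\}_n$ and the genuine packing measure $\widehat{\mathcal P}^\Psi$. Here one uses the geometric decay $\min_i r_i\,\abs{K_n(x)}\le \abs{K_{n+1}(x)}\le \max_i r_i\,\abs{K_n(x)}$ to ensure that the diameters of the $K_n(x)$ do not jump, so that disjoint families drawn from the net can be converted into admissible ball packings, and conversely, up to bounded multiplicative constants depending only on the $S_i$. I expect this net-versus-balls comparison, rather than any probabilistic input, to be the main (purely geometric) obstacle; the LIL furnishes the almost-sure size estimates and everything else is a transcription of the Bernoulli computation of Section \ref{bernoulli}.
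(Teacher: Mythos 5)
Your proposal is correct and follows exactly the route the paper intends: the paper offers no written proof of Theorem \ref{thself2} beyond the remark that it is the packing twin of Theorem \ref{thself}, and your argument supplies precisely that --- the reduction to i.i.d.\ variables (including the $K^0_{i_1\cdots i_n}$ construction under the Open Set Condition), the $\limsup$ half of the LIL combined with (\ref{LGN}), and the packing-measure arguments transcribed from Proposition \ref{pro2}. You also rightly identify the net-versus-ball comparison as the only point needing geometric care, which is the same point the paper itself dismisses as well known in the Hausdorff case.
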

\section{Quasi-Bernoulli measures}\label{quasibernoulli}
Natural generalisations of Bernoulli products or self-similar measures are the
so called quasi-Bernoulli measures.

The notations are the same as in Section \ref{bernoulli}. Suppose that the
$\ell$-adic cubes in ${\mathcal F}_n$ are coded $I_{\e_1\cdots \e_n}$, $0\le
  \e_i<\ell^D$ in such a way that 
$$I_{\e_1\cdots \e_{n+1}}\subset I_{\e_1\cdots \e_n},\quad \quad\forall
\e_1,\cdots ,\e_{n+1}\in\{0,\cdots ,\ell^D-1\}\ .$$
If $I=I_{\e_1\cdots \e_n}\in{\mathcal F}_n$ and
  $J=I_{\e_{n+1}\cdots \e_{n+p}}\in{\mathcal F}_p$, we note $IJ$ the
    $\ell$-adic cube 
$$IJ=I_{\e_1\cdots \e_{n+p}}\in{\mathcal F}_{n+p}$$
      obtained by the concatenation of the words $\e_1\cdots \e_n$ and
      $\e_{n+1}\cdots \e_{n+p}$. 

We say that 
the probability measure $m$ is a quasi-Bernoulli measure on $[0,1)^D$, 
if we can find a constant $C\geq 1$ such that 
\begin{equation}
\forall~~ I, J\in \bigcup_n{\mathcal F}_n,  ~~\frac{1}{C}m(I)m(J)
\leq m(IJ)\leq C m(I)m(J)\ .
\label{eq1}
\end{equation}
Quasi-Bernoulli property appears in many situations. In particular, 
this is the case for the harmonic measure in regular Cantor sets 
(\cite{Ca}, \cite{MaV}) and for the caloric measure in domains delimited 
by Weirstrass type graphs (\cite{BH}).
 The $L^q$-spectrum  $\tau$ is defined as usual by 
$$
\tau(q)=\limsup_{n\to +\infty}\tau_n(q)\quad\text{with}\quad\tau_n(q)=
\displaystyle\frac{1}{n\log \ell}
\log\left(\sum_{I\in {\mathcal F}_n} m(I)^q\right)\ .
$$

In the case of quasi-Bernoulli measures, sub and super multiplicative 
properties of the sequences
$$C^{\abs{q}}\sum_{I\in {\mathcal F}_n} m(I)^q\quad\text{and}\quad
C^{-\abs{q}}\sum_{I\in {\mathcal F}_n} m(I)^q $$
ensure that the sequence $\tau_n(q)$ converges and satisfies
\begin{equation}\label{eq4.2}
C^{-\abs{q}}\ell^{n\tau(q)}\le \sum_{I\in {\mathcal F}_n} m(I)^q\le 
C^{\abs{q}}\ell^{n\tau(q)}\ .
\end{equation}
We can see \cite{BMP}, \cite{He} or \cite{Ya07} for more details.

It is well known that quasi Bernoulli measures satisfy the multifractal
formalism (see \cite{BMP}) and it is proved in \cite{He} that the
$L^q$-spectrum  is of class $C^1$ on $\R$. In particular, according to
(\ref{equnidim}),  quasi-Bernoulli
measures are unidimensional measures with dimension 
$$d=-\tau'(1)\ .$$

The $L^q$-spectrum $\tau$ and the dimension $d$ of the measure $m$ have the
following probabilistic interpretations which
are detailed in \cite{Ya07}. If $I_n(x)$ is the unique cube in 
${\mathcal F}_n$ containing $x$, 
let
$$
\frac{S_n}{n}=\frac{X_1+\cdots  +X_n}{n}\hskip 1cm\text{and}\hskip 1cm X_n(x)=
-\log_\ell\left(\frac{m(I_n(x))}{m(I_{n-1}(x))}\right)\ .
$$
In other words,
$$\frac{S_n}{n}=
\frac{\log\left(m(I_n(x))\right)}{\log\left(\abs{I_n(x)}\right)}$$
where $\abs{I_n(x)}=\ell^{-n}$ is the ``length'' of the cube $I_n(x)$. The
asymptotic behavior of the sequence of random variables $S_n/n$ is then deeply
related to the local behavior of the measure $m$ and the dimension $d$ of the
measure $m$ is the almost sure limit of the sequence of random variables
$S_n/n$. Moreover, 
$$\tau_n(1-q)=\frac1n\log_\ell{\mathbb
  E}[\ell^{qS_n}]\quad\text{and}\quad\tau(1-q)= 
\lim_{n\to +\infty}\frac1n\log_\ell{\mathbb
  E}[\ell^{qS_n}]$$
are related to the log-Laplace transform of the sequence $S_n$. Finally,
(\ref{eq4.2}) can be rewritten
\begin{equation}\label{eq4.3}
C^{-1}\,\ell^{n\tau(1-q)}\le {\mathbb E}[\ell^{qS_n}]\le
C\,\ell^{n\tau(1-q)} 
\end{equation}
where the constant $C$ is independant of $n$ and independant of $q$, provided
$q$ stays in a bounded set. Inequalities (\ref{eq4.3}) will be usefull in the
following sections.

There exists a symbolic counterpart $\mu$ to the quasi-bernoulli 
measure $m$ which is
defined on the symbolic Cantor space $\{0,\cdots,\ell^D-1\}^{\N^*}$ 
as the image of
$m$ through the application 
$$
J(x)=(\e_i)_{i\ge 1}\quad\text{if}\quad\{x\}=\bigcap_{n\ge
  1}I_{\e_1\cdots\e_n}\ .
$$
Carleson observed in \cite{Ca} that such a quasi-Bernoulli measure $\mu$ on
the Cantor set $\{0,\cdots,\ell^D-1\}^{\N^*}$ is strongly equivalent to a
mesure $\tilde\mu$ (that is $\frac1C \mu\le \tilde \mu\le C\mu$ 
for some constant
$C\ge 1$) which is shift-invariant and ergodic, where the shift operator $S$
is defined by
$$
S\ :\ (\e_i)_{i\ge 1}\in \{0,\cdots,\ell^D-1\}^{\N^*}\longmapsto 
(\e_i)_{i\ge 2}\in \{0,\cdots,\ell^D-1\}^{\N^*}\ .$$

Coming back to $m$, it follows that $m$ is strongly equivalent to a
quasi-Bernoulli measure $\tilde m$ which is $T$-invariant and ergodic where
$T$ is the ``shift'' operator on $[0,1)^D$ defined by
\begin{equation}\label{eqT}
T\ :\ x=\bigcap_{n\ge 1}I_{\e_1\cdots\e_n}\longmapsto 
Tx=\bigcap_{n\ge 2}I_{\e_2\cdots\e_n}\ .
\end{equation}
This will be a key in Section \ref{secreverse} and \ref{secmore}.

Let us finally describe the closed support of the quasi-Bernoulli measure
$m$. If ${\mathcal G}_n$ is the set of $\ell$-adic cubes $I\in {\mathcal F}_n$
such that $m(I)>0$, it is clear that 
$$\text{supp}\,(m)=\bigcap_{n\ge 1}\bigcup_{I\in{\mathcal G}_n}\bar I\ .
$$
More precisely, let 
$$G=\left\{\e\in\{0,\cdots,\ell^D-1\}\ ;\quad m(I_\e)>0\right\}\ .$$
Quasi-Bernoulli property ensures that
$${\mathcal G}_n=\left\{I_{\e_1\cdots\e_n}\ ;\quad\forall i\in\{1,\cdots,n\},\ 
\e_i\in G\right\}\ .$$
In other words, in the symbolic counterpart, the associated measure $\mu$ is
constructed on the smaller Cantor set $G^{\N^*}$.

Let $g=\sharp(G)$ be the cardinal of the set $G$. Define the homogeneous
probability  measure
$m_0$ on $\text{supp}\,(m)$ by the formula :
$$m_0(I)=g^{-n},\quad\forall I\in {\mathcal G}_n\ .$$ 

Elementary properties of the measure $m_0$ allow us to conclude that the
dimension $\delta$ of the compact set $\text{supp}\, (m)$ satisfies 
$\delta=\log_\ell g$ and that there exists a constant $C>0$ such that for every
set Borel set  $A$,
$$\frac1C\,{\mathcal H}^\delta(A\cap \text{supp}\,(m))\le m_0(A)
\le C\,{\mathcal H}^\delta(A\cap \text{supp}\,(m))\ .$$

\subsection {A bound of type LIL}\label{seclil}

According to Remark \ref{sigma}, it is natural to think that the quadratic
term in the development of $\tau(1-q)$ near $q=0$ gives logarithmic
corrections in the comparison between $m$ and Hausdorff types measures. We are
able to establish such estimations in the case of quasi-Bernoulli measures. 

\begin{theorem}\label{th3} Let $m$ be a quasi-Bernoulli measure with dimension
  $d=-\tau'(1)$. Suppose that 
there exists a real $\sigma\ge 0$ such that $\tau(1-q)=qd+\displaystyle
\frac{\sigma^2}{2}q^2+o(q^2)$ in a neighborhood of $0$.  
Then, $dm$-almost surely,
\hskip3cm
$$
\begin{cases}
\displaystyle\limsup_{n\rightarrow+\infty} \displaystyle\frac{S_n-nd}
{\sqrt{2n\log_\ell\log_\ell n}}\leq \sigma\\ 
\null\\
\displaystyle\liminf _{n\rightarrow+\infty}\displaystyle\frac{S_n-nd}
{\sqrt{2n\log_\ell\log_\ell n}}\geq -\sigma \ . 
\end{cases}
$$
\end{theorem}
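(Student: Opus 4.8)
The plan is to run a Chernoff (exponential Markov) argument on the Laplace-transform estimate (\ref{eq4.3}), optimise the exponent, and then upgrade the resulting tail bounds to an almost sure statement by Borel--Cantelli. First I would treat the $\limsup$. Fix $\lambda>\sigma$ and put $u_n=\lambda\sqrt{2n\log_\ell\log_\ell n}$. For $q>0$ small, Markov's inequality applied to $\ell^{qS_n}$ together with (\ref{eq4.3}) gives
$$m\bigl(S_n\ge nd+u_n\bigr)\le C\,\ell^{-q(nd+u_n)+n\tau(1-q)}=C\,\ell^{-qu_n+n[\tau(1-q)-qd]}.$$
By hypothesis $\tau(1-q)-qd=\tfrac{\sigma^2}{2}q^2+o(q^2)$, so the exponent equals $-qu_n+\tfrac{\sigma^2}{2}nq^2+n\,o(q^2)$. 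Choosing $q=q_n=u_n/(\sigma^2 n)$, which tends to $0$ and hence lies in the neighbourhood where the expansion is valid, the main part of the exponent becomes $-u_n^2/(2\sigma^2 n)=-(\lambda^2/\sigma^2)\log_\ell\log_\ell n$ while the remainder is $o(\log_\ell\log_\ell n)$, whence
$$m\bigl(S_n\ge nd+u_n\bigr)\le C\,(\log_\ell n)^{-\lambda^2/\sigma^2+o(1)}.$$
The symmetric choice $q<0$ yields the same bound for $m(S_n\le nd-u_n)$, which will produce the $\liminf$ half.

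Second, these tail bounds are not summable in $n$, so I would pass to a geometric subsequence $n_k=\lfloor\beta^k\rfloor$ with $\beta>1$. Along it $\log_\ell n_k\sim k\log_\ell\beta$, so the $k$-th probability is of order $(k\log_\ell\beta)^{-\lambda^2/\sigma^2+o(1)}$, which is summable precisely because $\lambda>\sigma$. Borel--Cantelli then gives that $dm$-almost surely $S_{n_k}-n_k d\le u_{n_k}$ for all large $k$ (and, symmetrically, $S_{n_k}-n_k d\ge -u_{n_k}$). Letting $\lambda\downarrow\sigma$ along a countable sequence controls the subsequence $(n_k)$ at the sharp constant $\sigma$.

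Third, and this is where the real work lies, I must fill the gaps $n\in[n_k,n_{k+1}]$. Although $S_n=-\log_\ell m(I_n(x))$ is nondecreasing, the centred quantity $S_n-nd$ is not, and the crude bound $S_n-nd\le S_{n_{k+1}}-n_k d$ loses a term of size $(n_{k+1}-n_k)d\asymp(\beta-1)n_k$, which dwarfs the fluctuation scale $\sqrt{n_k\log_\ell\log_\ell n_k}$; so monotonicity alone is insufficient. The structure to exploit is the quasi-additive cocycle coming from (\ref{eq1}): if $I_{n_k+p}(x)=I_{n_k}(x)J$ then $S_{n_k+p}(x)=S_{n_k}(x)+S_p(T^{n_k}x)+O(1)$ uniformly, where $T$ is the shift (\ref{eqT}). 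This reduces $\max_{0\le p\le n_{k+1}-n_k}\bigl[(S_{n_k+p}-S_{n_k})-pd\bigr]$ to a maximal fluctuation of a centred sum over a block of length $(n_{k+1}-n_k)\asymp(\beta-1)n_k$, which I would again bound by the Chernoff optimisation above applied to the Laplace transform of the block increment (using (\ref{eq4.3}) for the shifted quasi-Bernoulli increment, whose constants are uniform in the base point), obtaining a summable bound of size $O\bigl(\sqrt{\beta-1}\,\sqrt{2n_k\log_\ell\log_\ell n_k}\bigr)$. Combining the subsequence control with this block control and then letting $\beta\downarrow 1$ renders the gap contribution negligible and yields $\limsup_n (S_n-nd)/\sqrt{2n\log_\ell\log_\ell n}\le\sigma$; the $\liminf$ bound follows from the mirror-image argument with $q<0$.

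The main obstacle, as indicated, is this gap-filling maximal inequality: in contrast with the independent/self-similar setting there is no genuine independence, and the only available input is the uniform Laplace bound (\ref{eq4.3}). The crux is to upgrade that pointwise-in-$n$ bound into control of $\max_{n_k\le n\le n_{k+1}}(S_n-nd)$. I expect the cleanest route is to verify, directly from (\ref{eq1}), a one-step estimate $\mathbb E[\ell^{qS_{n+1}}\mid I_n]\le C\,\ell^{\tau(1-q)}\,\ell^{qS_n}$ (where conditioning is on the generation-$n$ cube $I_n(x)$), which makes $\ell^{qS_n}\,\ell^{-n\tau(1-q)}$ a supermartingale up to a bounded multiplicative constant, and then to invoke Doob's maximal inequality; the fact that the constant in (\ref{eq4.3}) is uniform in $n$ is exactly what prevents the per-step quasi-Bernoulli constants from accumulating.
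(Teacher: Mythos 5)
Your overall architecture (Chernoff bound from (\ref{eq4.3}), optimisation at $q\asymp u_n/(\sigma^2 n)$, Borel--Cantelli along a geometric subsequence, and a maximal inequality to fill the gaps) is the right one, and you correctly identify the crux: upgrading the fixed-$n$ Laplace bound to control of $\max_{n_k\le n\le n_{k+1}}(S_n-nd)$. But the mechanism you propose for that maximal inequality does not work. The one-step estimate you can actually extract from the quasi-Bernoulli property (\ref{eq1}) is $\mathbb E[\ell^{qS_{n+1}}\mid I_n]\le C\,\ell^{\tau(1-q)}\,\ell^{qS_n}$ with a constant $C\ge 1$ that cannot be removed; iterating it over a block of length $n_{k+1}-n_k\asymp(\beta-1)n_k$ makes $\ell^{qS_n-n\tau(1-q)}$ a supermartingale only after multiplication by $C^{-n}$, and Doob's inequality then produces a spurious factor $C^{(\beta-1)n_k}$, exponentially large in $n_k$, which swamps the polylogarithmic target bound $(\log_\ell n_k)^{-\lambda^2/\sigma^2}$ needed for summability. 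The uniformity in $n$ of the constant in (\ref{eq4.3}) is a statement about a single expectation, obtained by subadditivity of $\log\left(C\sum_I m(I)^q\right)$; it does not yield a constant-free one-step supermartingale property, so it does not ``prevent the per-step constants from accumulating'' as you assert.

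The paper's Lemma \ref{lem1} circumvents this with a first-passage decomposition rather than Doob: writing $A_k$ for the event that $k$ is the first generation in $\{n_0,\dots,n_1\}$ with $S_k-kd\ge a$ (a union of cubes of ${\mathcal F}_k$), one bounds $\mathbb E[\ell^{qS_{n_1}}\mathds 1_{A_k}]$ from below by splitting each generation-$n_1$ cube $K\subset A_k$ as $K=IJ$ with $I\in{\mathcal F}_k$ and $J\in{\mathcal F}_{n_1-k}$, so (\ref{eq1}) is applied exactly once and only a single uniform constant appears; convexity of $\tau$ (which gives $\tau(1-q)\ge qd$) then yields $m(A_k)\le C\,\mathbb E[\ell^{qS_{n_1}}\mathds 1_{A_k}]\,\ell^{-q(a+n_1d)}$, and summing over the disjoint $A_k$ and optimising in $q$ gives $m\left(\sup_{n_0\le k\le n_1}(S_k-kd)\ge a\right)\le C\,\ell^{-a^2/(2n_1(\sigma+\varepsilon)^2)}$. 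With this lemma the paper works directly on the blocks $\{n_k,\dots,n_{k+1}\}$, $n_k=[\alpha^k]$, with $a=(\sigma+2\varepsilon)\sqrt{2n_k\log_\ell\log_\ell n_k}$, which handles your ``subsequence'' and ``gap'' steps in one stroke; the loss from the denominator $n_{k+1}$ instead of $n_k$ is absorbed by choosing $\alpha$ close to $1$. If you replace your supermartingale step by this stopping-time argument, the rest of your proof goes through, and your detour through the shift cocycle $S_{n_k+p}=S_{n_k}+S_p\circ T^{n_k}+O(1)$ becomes unnecessary.
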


\begin{remark}
Theorem \ref{th3} remains true when $\sigma=0$. In that case, the conclusion
is $\displaystyle\lim_{n\to +\infty}\frac{S_n-nd}
{\sqrt{n\log_\ell\log_\ell n}}=0$ $dm$-almost surely.
\end{remark}

\begin{remark}
In general, we do not know if $\tau''(1)$ exists. Nevertheless, an important
class of quasi-Bernoulli measures is constituted of Gibbs measures associated
to an H\"older potential. 
In such a case, the $L^q$-spectrum is known to be analytic 
(see for example \cite{Zin}) and the hypothesis in Theorem \ref{th3} are
satisfied.
\end{remark}
\begin{remark}Return to the case of Bernoulli products described in Section
  \ref{bernoulli}. An easy calculation
  gives 
$$\tau(q)=\log_2\left(p^q+(1-p)^p\right)\quad\text{and}\quad
  \tau''(1)=\frac{p(1-p)}{\ln 2}\left(\ln(\frac{p}{1-p})\right)^2
=(\ln 2)\,{\mathbb V}[X_n]\ .$$ 
Here, $\ell=2$ and the coefficient $\ln 2$ is due to the fact
that functions  $\Theta$
are not similarly normalised in Section \ref{bernoulli} and in  Theorem
\ref{th3}. 
\end{remark}

In order to prove Theorem \ref{th3}, we  need the following lemma, which is
some kind of maximal lemma adapted to the situation . 
\begin{lemma}\label{lem1}
Let $\varepsilon>0$, $a>0$ and $n_0<n_1$ be two integers. Then, for 
$\displaystyle\frac{a}{n_1(\sigma+\varepsilon)^2}$ small enough, one has
\begin{equation}
\label{eq2}
\displaystyle m\left\{ \sup_{k\in \{
n_0,\cdots ,n_1\}}(S_k-kd)\geq a\right\}\leq \displaystyle C\, 
\ell^{\frac{-a^2}{2n_1(\sigma+\varepsilon)^2}}
\end{equation}
where $C$ is a constant independent of all parameters. 
\end{lemma}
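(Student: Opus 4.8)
The plan is to combine an exponential Chebyshev bound with a first-passage decomposition that plays the role of Doob's maximal inequality, the crucial inputs being the two-sided moment control (\ref{eq4.3}) and the sub-/super-multiplicativity built into the quasi-Bernoulli inequality (\ref{eq1}). Throughout I fix a parameter $q\in(0,1)$ to be optimised at the end, and I write ${\mathcal F}_k$ for the $\sigma$-algebra generated by the $\ell$-adic cubes of generation $k$, so that $S_k=-\log_\ell m(I_k(x))$ is ${\mathcal F}_k$-measurable.

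First I would split the event according to the first generation at which the barrier is crossed. Setting $A=\{\sup_{n_0\le k\le n_1}(S_k-kd)\ge a\}$ and
$$A_k=\{S_j-jd<a\ \text{for}\ n_0\le j<k\}\cap\{S_k-kd\ge a\},\qquad n_0\le k\le n_1,$$
the sets $A_k$ are disjoint, $A=\bigcup_k A_k$, and each $A_k\in{\mathcal F}_k$. The heart of the argument is a lower bound for the forward conditional moment: using (\ref{eq1}) in the form $m(IJ)\ge C^{-1}m(I)m(J)$ and the left inequality in (\ref{eq4.3}), one computes, for $x$ with cube $I_k(x)$,
$$\mathbb{E}\big[\ell^{q(S_{n_1}-S_k)}\,\big|\,{\mathcal F}_k\big](x)=m(I_k(x))^{q-1}\sum_{J\subset I_k(x)} m(J)^{1-q}\ \ge\ c_0\,\ell^{(n_1-k)\tau(1-q)},$$
where the sum runs over the generation-$n_1$ subcubes of $I_k(x)$ and $c_0>0$ depends only on $C$; here $1-q>0$ is exactly what lets the quasi-Bernoulli inequality survive raising to the power $1-q$, so that the constant $C$ enters only once rather than compounding over the $n_1-k$ steps.

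Feeding this into the identity $\mathbb{E}[\ell^{qS_{n_1}}\mathbf 1_{A_k}]=\mathbb{E}[\ell^{qS_k}\mathbf 1_{A_k}\,\mathbb{E}[\ell^{q(S_{n_1}-S_k)}\mid{\mathcal F}_k]]$ and using $S_k\ge kd+a$ on $A_k$, I obtain
$$m(A_k)\ \le\ c_0^{-1}\,\ell^{-q(kd+a)}\,\ell^{-(n_1-k)\tau(1-q)}\,\mathbb{E}[\ell^{qS_{n_1}}\mathbf 1_{A_k}].$$
Since the hypothesis gives $\tau(1-q)-qd=\tfrac{\sigma^2}{2}q^2+o(q^2)\ge 0$ for $q$ small (the case $\sigma=0$ needing only a cosmetic adjustment of the $o(q^2)$ term), the prefactor is bounded above by $\ell^{-qa-n_1qd}$ uniformly in $k$; summing over the disjoint $A_k$ and applying the right inequality in (\ref{eq4.3}) collapses $\sum_k\mathbb{E}[\ell^{qS_{n_1}}\mathbf 1_{A_k}]=\mathbb{E}[\ell^{qS_{n_1}}\mathbf 1_A]\le\mathbb{E}[\ell^{qS_{n_1}}]\le C\ell^{n_1\tau(1-q)}$. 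This yields $m(A)\le C'\,\ell^{-qa+n_1(\tau(1-q)-qd)}$.

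Finally I would insert the Taylor estimate $\tau(1-q)-qd\le\tfrac{(\sigma+\varepsilon)^2}{2}q^2$, valid for $|q|$ below some threshold, and optimise: the exponent $-qa+n_1\tfrac{(\sigma+\varepsilon)^2}{2}q^2$ is minimised at $q=\dfrac{a}{n_1(\sigma+\varepsilon)^2}$, with value $-\dfrac{a^2}{2n_1(\sigma+\varepsilon)^2}$, giving (\ref{eq2}). The smallness hypothesis on $a/(n_1(\sigma+\varepsilon)^2)$ is precisely what guarantees that this optimal $q$ lies in the window where the Taylor bound and the estimates above hold. I expect the main obstacle to be the maximal inequality itself: because quasi-Bernoulli measures carry no independence or exact martingale structure, one cannot invoke Doob directly, and the real work is to replace it by the first-passage decomposition, where the uniform sub-multiplicative control (\ref{eq4.3}) prevents the quasi-Bernoulli constant from accumulating.
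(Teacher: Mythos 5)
Your proof is correct and follows essentially the same route as the paper's: the same first-passage decomposition into the sets $A_k$, the same use of the quasi-Bernoulli inequality together with (\ref{eq4.3}) to bound $\mathbb E[\ell^{qS_{n_1}}\mathds1_{A_k}]$ from below by $m(A_k)\,\ell^{q(kd+a)}\ell^{(n_1-k)\tau(1-q)}$, the bound $\tau(1-q)\ge qd$ (convexity, as in the paper) to make the prefactor uniform in $k$, and the Chernoff optimisation $q=a/(n_1(\sigma+\varepsilon)^2)$. Writing the key inequality as a lower bound on $\mathbb E[\ell^{q(S_{n_1}-S_k)}\mid\mathcal F_k]$ is only a cosmetic repackaging of the paper's explicit sum over cubes $IJ$.
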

\begin{proof}
For $n_0\le k\le n_1$, let 
$$A_k=\left\{ x\ ;\quad (S_j-jd<a\ \ \text{if}\  n_0\leq
  j<k)\quad\text{and}\quad S_k-kd\geq a\right\}.$$
We have to estimate $m\left(\bigcup_{k=n_0}^{n_1}A_k\right)$. Observe that
$A_k$ is the union of some cubes in ${\mathcal F}_{k}$ and denote by 
$${\mathcal A}_k=\left\{I\in {\mathcal F}_{k}\ ;\ I\subset A_k\right\}\ .$$
Let $0<q<1$. According to (\ref{eq4.3}), we have
\begin{eqnarray*}
\mathbb E [\ell ^{qS_{n_1}}\mathds1_{A_k}]&=& 
\sum_{K\in{\mathcal F}_{n_1}\ K\subset A_k}m(K)^{1-q}\\
&=&\sum_{I\in {\mathcal A}_k, J\in {\mathcal F}_{n_1-k}}m(IJ)^{1-q}\\
&\ge& C\sum_{I\in {\mathcal A}_k}m(I)^{1-q}\sum_{J\in {\mathcal F}_{n_1-k}}
m(J)^{1-q}\\
&=& C\sum_{I\in {\mathcal A}_k}m(I)^{1-q}\mathbb E [\ell^{qS_{n_1-k}}]\\
&\ge& C\,\mathbb E [\ell^{qS_{k}}\mathds1_{{\mathcal A}_k}]\ell^{(n_1-k)\tau(1-q)}\\
&\ge& C\,m(A_k)\ell^{q(kd+a)}\ell^{(n_1-k)\tau(1-q)}\ .
\end{eqnarray*}
The constant $C$ can change from line to line but is independant of
$k$, $n_1$ and $q\in [0,1]$. 

Remember that $\tau$ is convex and $d=-\tau'(1)$. We can finally find a constant $C>0$, such that 
\begin{eqnarray*}
m(A_k)&\leq& C\,\mathbb E [\ell ^{qS_{n_1}}\mathds1_{A_k}]
\ell^{-q(kd+a)}\ell^{-(n_1-k)\tau(1-q)}\\
&\leq& C\,\mathbb E [\ell ^{qS_{n_1}}\mathds1_{A_k}]
\ell^{-q(kd+a)}\ell^{-(n_1-k)dq}\\
&=& C\,\mathbb E [\ell ^{qS_{n_1}}\mathds1_{A_k}]\ell^{-qa-qn_1d}\ .
\end{eqnarray*}
Let $\e>0$. If $q$ is small enough, we get
\begin{eqnarray*}
m(\bigcup_{k=n_0}^{n_1}A_k)&\leq& C\,\mathbb E [\ell ^{qS_{n_1}}]\ell^{-qa-qn_1d}\\
&\leq& C\,\displaystyle \ell^{n_1(\tau(1-q)-qd)}\ell^{-qa}\\
&\leq& C\,\displaystyle\ell^{n_1(\sigma+\varepsilon)^2\frac{q^2}{2}-qa}\ .
\end{eqnarray*}
This estimate is optimal for 
$q=\frac{a}{n_1(\sigma+\varepsilon)^2}$. Finally, if
$\frac{a}{n_1(\sigma+\varepsilon)^2}$ is small enough, we obtain 
$$m(\bigcup_{k=n_0}^{n_1}A_k)\leq
C\,\ell^{-\frac{a^2}{2n_1(\sigma+\varepsilon)^2}}$$
which concludes the proof of Lemma \ref{lem1}.
\end{proof}
\noindent
\begin{proof}[Proof of Theorem \ref{th3}]
The proof of Theorem \ref{th3} is quite standard. Fix $\e>0$ and choose
$\alpha>1$ such that 
$\frac{(\sigma+2\varepsilon)^2}{\alpha(\sigma+\varepsilon)^2}>1$. For $k\in
\mathbb N$, let 
$n_k=[\alpha^k]$, (the integrand part of $\alpha^k$) and
$$
B_k=\left\{\exists\ n\in \{n_k,\cdots ,n_{k+1}\} ~~:~~S_n -nd\geq 
(\sigma+2\varepsilon)\sqrt{2n_k\log_\ell\log_\ell n_k} \right\}
$$
An easy calculation proves that 
$$\frac{(\sigma+2\varepsilon)\sqrt{2n_k\log_\ell\log_\ell
    n_k}}{n_{k+1}(\sigma+\e)^2}$$
goes to 0 when $k\to +\infty$, so that we can apply Lemma \ref{lem1} when $k$
is large enough. We get
$$m(B_k)\le C\,\left[\log_\ell
  n_k\right]^{-\frac{(\sigma+2\e)^2n_k}{(\sigma+\e)^2n_{k+1}}}\ .$$
We claim that
$$\left[\log_\ell
  n_k\right]^{-\frac{(\sigma+2\e)^2n_k}{(\sigma+\e)^2n_{k+1}}}\sim
\left[k\log_\ell\alpha\right]^{-\frac{(\sigma+2\e)^2}{\alpha(\sigma+\e)^2}}$$
so that $\displaystyle\sum_km(B_k)$ converges. Hence, by the 
Borel-Cantelli lemma, almost surely, 
only finitely many of these events occur, which 
achieves the proof of  the first estimation of Theorem \ref{th3}. The
second part of Theorem \ref{th3} can be proved in the same way.
\end{proof}

Theorem \ref{th3} allows us to compare the measure $m$ with Hausdorff and
packing measures . That is what is done in the following corollary.

 \begin{corollary} Let $\Theta(t)=
\ell^{\sqrt{2\log_\ell1/t\log_\ell\log_\ell\log_\ell1/t}}$. Then, for all 
$\varepsilon>0$,
\begin{enumerate}
\item $m\ll  {\mathcal H}^\Psi$, where $\Psi(t)=t^d\Theta(t)^{\sigma+\varepsilon}$
\item $m\bot  \widehat{\mathcal P}^\Psi$, where 
$\Psi(t)=t^d\Theta(t)^{-(\sigma+\varepsilon)}$.
\end{enumerate} 
\end{corollary}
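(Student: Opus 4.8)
The plan is to transcribe, almost verbatim, the arguments of Propositions \ref{pro1} and \ref{pro2}, with the only change that the exact Law of the Iterated Logarithm is replaced by the two one-sided bounds supplied by Theorem \ref{th3}. The bridge between the probabilistic statement and the geometric one is the identity $S_n=-\log_\ell(m(I_n(x)))$ together with $\abs{I_n(x)}=\ell^{-n}$: taking $t=\ell^{-n}$ in the definition of $\Theta$ gives $\log_\ell(1/t)=n$ and $\log_\ell\log_\ell\log_\ell(1/t)=\log_\ell\log_\ell n$, so that
$$\Theta(\abs{I_n(x)})=\ell^{\sqrt{2n\log_\ell\log_\ell n}},$$
i.e. $\log_\ell\Theta(\abs{I_n(x)})$ is exactly the normalising denominator appearing in Theorem \ref{th3}. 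First I would turn each conclusion of that theorem into a pointwise density estimate. The $\liminf$ bound gives, $dm$-a.s. and for every $\varepsilon>0$, an integer $n_0$ with $m(I_n(x))\le\abs{I_n(x)}^d\,\Theta(\abs{I_n(x)})^{\sigma+\varepsilon}$ for all $n\ge n_0$; the $\limsup$ bound gives, on a full-measure set, an $n_0$ with $m(I_n(x))\ge\abs{I_n(x)}^d\,\Theta(\abs{I_n(x)})^{-(\sigma+\varepsilon)}$ for all $n\ge n_0$.

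For assertion (1), I would argue as in Proposition \ref{pro1}(1). Writing $\Psi(t)=t^d\Theta(t)^{\sigma+\varepsilon}$ and $B_n=\{x\ ;\ m(I_n(x))\le\Psi(\abs{I_n(x)})\}$, the upper density estimate says precisely that $m$ is carried by $\liminf_n B_n$. The classical covering lemma (\cite{Mat}) then yields $m(E\cap\liminf_n B_n)\le{\mathcal H}^\Psi(E)$ for every Borel set $E$, so that ${\mathcal H}^\Psi(E)=0$ forces $m(E)=0$, which is $m\ll{\mathcal H}^\Psi$. The one point deserving a word is that Hausdorff $\Psi$-measures computed with the $\ell$-adic net $\{I_n(x)\}$ are comparable with the genuine ones; this holds because these cubes have diameter comparable to $\ell^{-n}$ and form a regular net, exactly as in the self-similar case of Section \ref{secself}.

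For assertion (2), I would reproduce the pre-measure argument of Proposition \ref{pro2}(2). Set $\Psi(t)=t^d\Theta(t)^{-(\sigma+\varepsilon)}$ and
$$E_{n_0}=\left\{x\ ;\ \forall n\ge n_0,\ m(I_n(x))\ge\abs{I_n(x)}^d\,\Theta(\abs{I_n(x)})^{-(\sigma+\varepsilon)}\right\}.$$
The lower density estimate shows $m\left(\bigcup_{n_0}E_{n_0}\right)=1$, and, using the disjointness of the $\ell$-adic cubes together with $m(I_n(x))\ge\Psi(\abs{I_n(x)})$ on $E_{n_0}$, the packing pre-measure obeys ${\mathcal P}^{\Psi}(E_{n_0})\le 1$ (this is the role of \cite{Fa} for the link between ${\mathcal P}^{\Psi}$ and $\widehat{\mathcal P}^{\Psi}$). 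Since replacing $\varepsilon$ by $2\varepsilon$ multiplies the gauge by $\Theta(t)^{-\varepsilon}\to0$ as $t\to0$, the finiteness of ${\mathcal P}^{\Psi}(E_{n_0})$ upgrades to zero pre-measure for the smaller gauge, hence $\widehat{\mathcal P}^{\Psi'}\left(\bigcup_{n_0}E_{n_0}\right)=0$ with $\Psi'(t)=t^d\Theta(t)^{-(\sigma+2\varepsilon)}$ by countable subadditivity, while $m$ gives this set full mass. As $\varepsilon$ is arbitrary, this is $m\bot\widehat{\mathcal P}^\Psi$.

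I do not expect a genuine obstacle: the delicate points are the net-versus-genuine comparison for the Hausdorff measure and the gauge-comparison bookkeeping for the packing pre-measure, but both are identical to the Bernoulli and self-similar cases already treated, so the only new input is Theorem \ref{th3}, which we may assume. It is worth noting that only these two ``eventually'' directions are claimed, and this is sharp for the present method: the reverse comparisons would require a lower bound on the $\limsup$ and an upper bound on the $\liminf$, information that Theorem \ref{th3} does not provide and that is the subject of the later sections.
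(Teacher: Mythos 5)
Your proposal is correct and follows exactly the route the paper intends: its proof of this corollary is the single line ``The arguments are the same as in Section \ref{bernoulli}'', and you have simply expanded that, correctly matching the $\liminf$ bound of Theorem \ref{th3} to the covering argument of Proposition \ref{pro1}(1) and the $\limsup$ bound to the pre-measure argument of Proposition \ref{pro2}(2), with the normalisation $\Theta(\ell^{-n})=\ell^{\sqrt{2n\log_\ell\log_\ell n}}$ checked. Your closing observation that only these two directions are available, because Theorem \ref{th3} gives only one-sided bounds, is also exactly why the corollary claims no more.
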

\begin{proof}
The arguments are the same as in Section \ref{bernoulli}.
\end{proof}
\subsection{Estimations in the reverse sense}\label{secreverse}
The lack of independance does not allow us to have so precise estimations
in the
reverse sens. 
Nevertheless, we have the following general result.
\begin{theorem}
\label{th4}
Let $m$ be a quasi-Bernoulli measure with dimension $d=-\tau'(1)$. 
Suppose that there exists a real 
$\sigma>0$ such that 
$\tau(1-q)=qd+\displaystyle\frac{\sigma^2}{2}q^2+o(q^2)$ in a neighborhood 
of $0$. Then, almost surely,
\hskip3cm
$$
\begin{cases}
\displaystyle\limsup_{n\rightarrow+\infty}
\displaystyle\frac{S_n-nd}{\sqrt{n}}= 
+\infty\\
\null\\ 
\displaystyle\liminf_{n\rightarrow+\infty}
\displaystyle\frac{S_n-nd}{\sqrt{n}}= 
-\infty
\end{cases}
$$
\end{theorem}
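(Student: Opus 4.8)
The plan is to extract the $\sqrt n$--fluctuations of $W_n:=S_n-nd$ from the log-Laplace bound (\ref{eq4.3}) and to upgrade a positive-probability statement to an almost-sure one using the ergodicity that was set up before this subsection. Throughout I would use that $m$ is strongly equivalent to the $T$-invariant ergodic measure $\tilde m$, so that every almost-sure statement and every ``positive probability'' statement is insensitive to replacing $m$ by $\tilde m$. The target reduces to showing, for every $A>0$, that the constant value of the (invariant) $\limsup$ is $\ge A$, and symmetrically for the $\liminf$.

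First I would check that $\varphi(x)=\limsup_n W_n(x)/\sqrt n$ is $T$-invariant. Applying the quasi-Bernoulli inequality (\ref{eq1}) to $I_{\e_1}$ and $I_{\e_2\cdots\e_{n+1}}$ gives $S_{n+1}(x)=X_1(x)+S_n(Tx)+O(1)$, the $O(1)$ being bounded by $\log_\ell C$ and $X_1$ being bounded on $\mathrm{supp}(m)$ (only finitely many digits carry positive mass). Hence $W_n(Tx)=W_{n+1}(x)+O(1)$; dividing by $\sqrt n$, the bounded term and the ratio $\sqrt{n+1}/\sqrt n\to 1$ are harmless, so $\varphi\circ T=\varphi$. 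By ergodicity of $\tilde m$, $\varphi$ is almost surely a constant $L\in[0,+\infty]$, and likewise for the $\liminf$. It then suffices to prove $\limsup_n\Proba(W_n\ge A\sqrt n)>0$: by reverse Fatou this gives $\Proba(W_n\ge A\sqrt n\ \text{i.o.})>0$, whence $\Proba(\varphi\ge A)>0$, and since $\varphi$ is a.s. constant the $0$--$1$ alternative forces $L\ge A$; letting $A\to+\infty$ yields $L=+\infty$.

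The core step is this anti-concentration bound, which I would obtain by Paley--Zygmund applied to $Z_n=\ell^{qW_n}$ with $q=c/\sqrt n$ for a large constant $c$. Since $q\to 0$, the estimate (\ref{eq4.3}) together with the hypothesis $\tau(1-q)-qd=\tfrac{\sigma^2}{2}q^2+o(q^2)$ yields $\mathbb E[\ell^{qW_n}]\asymp \ell^{\sigma^2 c^2/2}$ and, with $2q$ in place of $q$, $\mathbb E[\ell^{2qW_n}]\asymp \ell^{2\sigma^2 c^2}$, both uniformly in large $n$. Therefore $(\mathbb E Z_n)^2/\mathbb E[Z_n^2]\ge\kappa(c)>0$ uniformly in $n$, and Paley--Zygmund gives $\Proba(Z_n\ge\tfrac12\mathbb E Z_n)\ge\kappa'(c)>0$. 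Through $q\sqrt n=c$ the threshold $\tfrac12\mathbb E Z_n$ corresponds to $W_n$ being at least $\tfrac{\sigma^2 c}{2}\sqrt n$ up to lower-order terms; choosing $c>2A/\sigma^2$ makes this at least $A\sqrt n$ for $n$ large, so $\Proba(W_n\ge A\sqrt n)\ge\kappa'(c)$ for all large $n$. The $\liminf$ statement is entirely symmetric, run with $q=-c/\sqrt n$, for which the expansion of $\tau(1-q)$ near $0$ is still valid.

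The main obstacle I anticipate is precisely this anti-concentration estimate: in the absence of independence there is no central limit theorem, so the genuine fluctuation of $W_n$ at scale $\sqrt n$ must be read off solely from the quadratic term of the $L^q$-spectrum. The delicate points are to verify that the implicit constants in (\ref{eq4.3}) remain uniform for $q=c/\sqrt n$ and $q=2c/\sqrt n$ (ensured because these stay in a fixed bounded neighbourhood of $0$), and that the $o(q^2)$ term contributes only $c^2\,o(1)$ after multiplication by $n$, so that the ratio $(\mathbb E Z_n)^2/\mathbb E[Z_n^2]$ is bounded below independently of $n$. Everything else --- the invariance of $\varphi$ and the passage to the ergodic model --- is routine given the material already set up.
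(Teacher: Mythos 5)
Your proposal is correct and follows essentially the same route as the paper: the paper's lower bound on $m(\{S_n>nd+A\sqrt n\})$ is obtained by exactly the split-and-Cauchy--Schwarz computation that underlies Paley--Zygmund, with the same choice $q=\lambda/\sqrt n$ and the same use of (\ref{eq4.3}) together with the quadratic expansion of $\tau(1-q)$, and the upgrade from positive probability to probability one via shift-invariance and ergodicity of the equivalent measure $\tilde m$ is identical. The only cosmetic differences are that you package the second-moment step as Paley--Zygmund and the zero--one step in terms of the invariant function $\limsup_n (S_n-nd)/\sqrt n$ rather than the invariant event $\{S_n>nd+A\sqrt n\ \text{i.o.}\}$.
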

\begin{proof}  Remember that the mesure $m$ is strongly
  equivalent to a 
  quasi-Bernoulli measure $\tilde m$ which is ``shift'' invariant and ergodic
  (see the introduction of the section). The
  $L^q$-spectrum is the same for the two measures. Moreover, with obvious
  notations, there exists a constant $C>0$ independant of $n$ and $x$ such
  that $\abs{S_n-\tilde S_n}\le C$. It follows that the 
  asymptotic behavior of the quantity
  $\frac{S_n-nd}{\sqrt{n}}$ is the same for the measure $m$
  and the  measure
  $\tilde m$. We can then
  assume, without lost of generality, that the measure $m$ is ``shift'' 
  invariant and ergodic.

Let $A>0$ and $0<q<1$, 
\begin{eqnarray*}
\mathbb E [\ell ^{qS_n}]&=& \mathbb E [\ell
^{qS_n}\mathds1_{S_n\le nd+A\sqrt{n}}]
+\mathbb E [\ell ^{qS_n}\mathds1_{S_n> nd+A\sqrt{n}}]\\
&\leq& \ell ^{q(nd+A\sqrt{n})}+m(\{S_n> nd+A\sqrt{n}\})^{1/2}
\mathbb E [\ell ^{2qS_n}]^{1/2}\ .
\end{eqnarray*}
According to (\ref{eq4.3}), 
we have
$$
c_1 \ell^{n\tau(1-q)}\leq 
\ell
^{q(nd+A\sqrt{n})}+c_2m(\{S_n>nd+A\sqrt{n}\})^{1/2}\ell^{(n/2)\tau(1-2q)}\ .
$$
So, if $\e>0$ and $q$ is small enough,
\begin{eqnarray*}
m(\{S_n> nd+A\sqrt{n}\})^{1/2} &\geq& 
\displaystyle\frac{c_1\ell^{n(\tau(1-q)-qd)-qA\sqrt{n}}-1}
{c_2\ell^{n(\frac{1}{2}\tau(1-2q)-qd)-qA\sqrt{n}}}\\
&\geq& \displaystyle\frac{c_1\ell^{n(\sigma-\varepsilon)^2\frac{q^2}{2}-
qA\sqrt{n}}-1}{c_2\ell^{n(\sigma+\varepsilon)^2q^2-qA\sqrt{n}}}\ .
\end{eqnarray*}
Take $q=\frac{\lambda}{\sqrt{n}}$. We get
$$
m(\{S_n> nd+A\sqrt{n}\})^{1/2}\ge 
\frac{c_1\ell^{(\sigma-\varepsilon)^2\frac{\lambda^2}{2}-
\lambda A}-1}{c_2\ell^{(\sigma+\varepsilon)^2\lambda^2-\lambda A}}\ .
$$
We can choose $\lambda$ large enough such that $c_1\ell^{(\sigma-\varepsilon)^2
\frac{\lambda^2}{2}-\lambda A}-1 >0$. It follows that there exists a constant
$c>0$ such that for sufficiently large $n$, 
$$m(\left\{S_n> nd+A\sqrt{n}\right\})\ge c\ .$$
Finally
$$ m(\left\{S_n> nd+A\sqrt{n} ~~i.o\right\})\ge c\ .$$
Recall that $S_n=-\log_\ell (m(I_n(x)))$. Quasi-Bernoulli property implies
that for $dm$-almost all $x\in [0,1)^D$, 
$$\abs{\log_\ell(m(I_n(x)))-\log_\ell(m(I_n(Tx)))}\le C,$$
where $T$ is the ``shift'' operator described in (\ref{eqT}).  
Finally, the set
$$\left\{S_n> nd+A\sqrt{n} ~~i.o\right\}$$ 
is shift invariant and we can
conclude that 
$$ m(\left\{S_n> nd+A\sqrt{n}~~i.o\right\})=1\ .$$ 
The real $A$ being arbitrary large, we obtain the first part of Theorem
\ref{th4}. We can prove the second part of Theorem \ref{th4} is a similar way, 
using estimations of $\mathbb E [\ell ^{qS_n}]$ with $q<0$.
\end{proof}

\begin{corollary}
The hypothesis are the same as in Theorem \ref{th4}. 
Let $a\in\R$ and $\psi_a(t)=t^d\ell^{a\sqrt{\log_\ell 1/t}}$. We have 
 $$\forall a>0,\ m\bot  {\mathcal H}^{\psi_a}\hskip 1cm \text{and} \hskip 1cm
 \forall a<0,\ m\ll \widehat{{\mathcal P}}^{\psi_a}\ .$$
\end{corollary}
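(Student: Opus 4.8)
The plan is to read off, from the two divergence relations in Theorem~\ref{th4}, pointwise comparisons between $m(I_n(x))$ and the gauge $\psi_a$ evaluated on the $\ell$-adic cubes, and then feed these into the comparison lemmas already used in Section~\ref{bernoulli}. The key dictionary is the identity $S_n=-\log_\ell(m(I_n(x)))$ together with $\abs{I_n(x)}=\ell^{-n}$, which gives $\log_\ell(1/\abs{I_n(x)})=n$ and therefore $\psi_b(\abs{I_n(x)})=\ell^{-nd+b\sqrt n}$ for every real $b$. Hence $m(I_n(x))\le\psi_b(\abs{I_n(x)})$ is equivalent to $S_n-nd\ge -b\sqrt n$, while the reverse inequality is equivalent to $S_n-nd\le -b\sqrt n$. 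In this way the asymptotics of $(S_n-nd)/\sqrt n$ translate directly into density estimates for $m$ against the family $\{\psi_a\}$.

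For the absolute continuity statement I would fix $a<0$ and set $M=-a>0$. The relation $\limsup_n(S_n-nd)/\sqrt n=+\infty$ gives, almost surely, $S_n-nd\ge M\sqrt n$ for infinitely many $n$, i.e. $m(I_n(x))\le\psi_a(\abs{I_n(x)})$ infinitely often on a set of full $m$-measure. Writing $B_n=\{x;\ m(I_n(x))\le\psi_a(\abs{I_n(x)})\}$, we then have $m(\limsup_n B_n)=1$, and the packing comparison lemma used in Proposition~\ref{pro2}$(1)$ yields $m(E)\le\widehat{\mathcal P}^{\psi_a}(E)$ for every Borel set $E$. This is exactly $m\bot$-free, i.e. $m\ll\widehat{\mathcal P}^{\psi_a}$.

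For the singularity statement I would fix $a>0$, choose any $M>a$, and use instead $\liminf_n(S_n-nd)/\sqrt n=-\infty$. Almost surely this gives $S_n-nd\le -M\sqrt n$ for infinitely many $n$, that is $m(I_n(x))\ge\psi_M(\abs{I_n(x)})$ infinitely often on a full-measure set $E_0$. The upper density theorem (see \cite{Mat}), applied as in Proposition~\ref{pro1}$(2)$, then gives $\mathcal H^{\psi_M}(E_0)\le C\,m(E_0)<+\infty$. Since $a<M$ we have $\psi_a(t)/\psi_M(t)=\ell^{(a-M)\sqrt{\log_\ell 1/t}}\to0$ as $t\to0$, so $\psi_a\ll\psi_M$ and the finiteness of $\mathcal H^{\psi_M}(E_0)$ forces $\mathcal H^{\psi_a}(E_0)=0$. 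As $m(E_0)=1$, we conclude $m\bot\mathcal H^{\psi_a}$.

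The routine verifications are harmless here: because the $\ell$-adic cubes contract at the fixed geometric rate $1/\ell$, Hausdorff and packing measures computed along the cubes $I_n(x)$ are comparable to the genuine ones, exactly as invoked in the self-similar proof, so the density and comparison lemmas of \cite{Mat} apply verbatim. The only point requiring a little care is the gauge comparison $\psi_a\ll\psi_M$ that upgrades the finiteness of $\mathcal H^{\psi_M}(E_0)$ to $\mathcal H^{\psi_a}(E_0)=0$; this is what makes the strict inequality $a<M$ necessary, and it is the reason the singularity half needs the extra room $M>a$, whereas the packing half works directly with $M=-a$.
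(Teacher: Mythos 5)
Your proof is correct and takes essentially the same approach as the paper: the paper's own (very condensed) proof simply translates the two divergence statements of Theorem \ref{th4} into the infinitely-often inequalities $m(I_n(x))\le\psi_a(|I_n(x)|)$ and $m(I_n(x))\ge\psi_a(|I_n(x)|)$ on sets of full $m$-measure and then invokes the covering and packing comparison arguments of Section \ref{bernoulli}, exactly as you do. The extra room $M>a$ in your singularity half is the same device the paper uses (``$\e$ arbitrary small'') in Proposition \ref{propobernou}; only your phrase ``$m\bot$-free'' is garbled, though the intended meaning ($m(E)\le\widehat{\mathcal P}^{\psi_a}(E)$ for all $E$, hence absolute continuity) is clear.
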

\begin{proof}
Let $a\in\R$. Theorem \ref{th4} naturally implies that $dm$-almost surely,
infinitely often, 
$$
\begin{cases}
m(I_n(x))\le\abs{I_n(x)}^d\ell^{a\sqrt{-\log_\ell(\abs{I_n(x)})}}\\
\null\\
m(I_n(x))\ge\abs{I_n(x)}^d\ell^{a\sqrt{-\log_\ell(\abs{I_n(x)})}}
\end{cases}
$$
which gives the conclusion with similar arguments as in Section
\ref{bernoulli}. 
\end{proof}
In particular we can deduce :
\begin{corollary}
The measure $m$ satisfies the following properties :
\begin{enumerate}
\item There exists $E\subset \R^D$ such that $m(E)=1$ and 
${\mathcal H}^d(E)=0$. In particular $m\bot  {\mathcal H}^d$.
\item If $\widehat{{\mathcal P}}^d(E)<+\infty$ then $m(E)=0$. In particular 
$m\ll \widehat{{\mathcal P}}^d$.
\end{enumerate}
\end{corollary}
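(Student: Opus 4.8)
The plan is to derive both statements directly from the previous corollary, which provides, for every $a>0$, a set carrying $m$ that is $\mathcal{H}^{\psi_a}$-null, and, for every $a<0$, the absolute continuity $m\ll\widehat{{\mathcal P}}^{\psi_a}$, where $\psi_a(t)=t^d\ell^{a\sqrt{\log_\ell 1/t}}$. The whole argument reduces to comparing the gauge $\psi_a$ with the pure power $t^d$ as $t\to 0$ and then invoking the monotonicity of generalized Hausdorff and packing measures with respect to the gauge function. No new probabilistic input is needed beyond Theorem \ref{th4} and its corollary.

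For (1), I would fix any $a>0$. Since $\ell^{a\sqrt{\log_\ell 1/t}}\ge 1$ for $t$ small, we have $t^d\le\psi_a(t)$ near $0$, whence $\mathcal{H}^d\le\mathcal{H}^{\psi_a}$ by monotonicity of the Hausdorff measure in the gauge. The singularity $m\bot\mathcal{H}^{\psi_a}$ furnishes a Borel set $E$ with $m(E)=1$ and $\mathcal{H}^{\psi_a}(E)=0$; the comparison then forces $\mathcal{H}^d(E)=0$. This is exactly the set required in (1), and $m\bot\mathcal{H}^d$ is immediate from the decomposition into $E$ and its complement.

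For (2), I would fix any $a<0$. Now $\psi_a(t)/t^d=\ell^{a\sqrt{\log_\ell 1/t}}\to 0$ as $t\to 0$, so $\psi_a(t)=o(t^d)$. If $\widehat{{\mathcal P}}^d(E)<+\infty$, the same premeasure argument used at the end of Proposition \ref{pro2} shows $\widehat{{\mathcal P}}^{\psi_a}(E)=0$: for every $\eta>0$ one has $\psi_a(t)\le\eta\, t^d$ for $t$ small, hence $\mathcal{P}^{\psi_a}(E)\le\eta\,\mathcal{P}^{t^d}(E)$ at the level of packing premeasures, and passing to the packing measures and letting $\eta\to 0$ gives $\widehat{{\mathcal P}}^{\psi_a}(E)=0$. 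Absolute continuity $m\ll\widehat{{\mathcal P}}^{\psi_a}$ then yields $m(E)=0$. Taking a set $E$ with $\widehat{{\mathcal P}}^d(E)=0$ as a special case gives $m\ll\widehat{{\mathcal P}}^d$.

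The only genuinely delicate point is the passage from $\widehat{{\mathcal P}}^d(E)<+\infty$ to $\widehat{{\mathcal P}}^{\psi_a}(E)=0$ in (2): unlike (1), a mere inequality between gauges does not suffice, and one must exploit that the ratio $\psi_a(t)/t^d$ tends to $0$, working with the packing premeasure before taking the infimum over countable decompositions. This is precisely the step already carried out in Proposition \ref{pro2}, so the argument is routine and no new difficulty arises.
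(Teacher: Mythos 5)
Your proposal is correct and follows exactly the route the paper intends: the paper states this corollary as an immediate consequence of the preceding one (``In particular we can deduce''), and your gauge-comparison argument --- $t^d\le\psi_a(t)$ for $a>0$ giving ${\mathcal H}^d\le{\mathcal H}^{\psi_a}$, and $\psi_a(t)=o(t^d)$ for $a<0$ combined with the premeasure argument already used at the end of Proposition \ref{pro2} --- is precisely the deduction being left implicit. Nothing is missing.
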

\subsection{More general estimations}\label{secmore}
As remarked in the previous section, in the general case, we do not know 
if $\tau''(1)$
exists and is strictly positive. Nevertheless, in the general case we can
obtain the less precise following result.

\begin{theorem}\label{th+}
Let $m$ be a quasi-Bernoulli measure with dimension $d=-\tau'(1)$. Let
$\chi(q)=\tau(1-q)-qd$. Suppose that there exists a constant $C>0$ such that
\begin{equation}\label{notflat}
\forall q\in(0,1],\quad 0<\chi(q)\le C\,\chi(q/2)\ .
\end{equation}
Then, $dm$-almost surely,
$$\limsup_{n\to
  +\infty}\frac{S_n-nd}{\theta(n)}\ge 1,$$ 
where $\theta(t)=\frac{1}{\chi^{-1}(1/t)}$ and $\chi^{-1}$ is the inverse
fonction of $\chi$ on $[0,\chi(1)]$.\\
As a consequence, for all $0<a<1$ we have
$$m\ll\widehat{\mathcal P}^{\psi_a}\quad\text{where}
\quad\psi_a(t)=t^d\ell^{-a\theta(\log_\ell 1/t)}\
.$$
In particular, $m$ is absolutely continuous with
respect to $\widehat{\mathcal P}^d$. 
\end{theorem}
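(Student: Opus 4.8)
The genuine content of the theorem is the almost-sure lower bound for $\limsup_n(S_n-nd)/\theta(n)$; the absolute-continuity statements then follow exactly as in Section \ref{bernoulli}, by translating ``$S_n\ge nd+a\theta(n)$ infinitely often'' into ``$m(I_n(x))\le\psi_a(\abs{I_n(x)})$ infinitely often'' (since $m(I_n(x))=\ell^{-S_n}$ and $\theta(n)=\theta(\log_\ell(1/\abs{I_n(x)}))$) and invoking the packing covering lemma used there, so I concentrate on the $\limsup$. Following the proof of Theorem \ref{th4}, I first replace $m$ by the strongly equivalent $T$-invariant and ergodic measure $\tilde m$, which has the same $L^q$-spectrum and satisfies $\abs{S_n-\tilde S_n}\le C$; this leaves $\limsup_n(S_n-nd)/\theta(n)$ unchanged. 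Because $\abs{S_n(x)-S_n(Tx)}\le C$ while $\theta(n)\to+\infty$, the random variable $L(x)=\limsup_n(S_n(x)-nd)/\theta(n)$ is $T$-invariant, hence almost surely equal to a constant $L_0$ by ergodicity. It therefore suffices to prove, for each fixed $\beta<1$, that $m(\{L\ge\beta\})>0$; this forces $L_0\ge\beta$, and letting $\beta\uparrow1$ gives $L_0\ge1$.

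To obtain $m(\{L\ge\beta\})>0$ I will show that $m(A_n)\ge c>0$ for all large $n$, where $A_n=\{S_n>nd+\beta\theta(n)\}$ and $c=c(\beta)$. Reverse Fatou then yields $m(\limsup_n A_n)\ge\limsup_n m(A_n)\ge c$, and $\limsup_n A_n\subset\{L\ge\beta\}$. Fix a large parameter $s\ge1$ and take $q=q_n=\chi^{-1}(s/n)$, so that $n\chi(q_n)=s$. The crucial geometric fact is that, since $\chi$ is convex with $\chi(0)=0$, its inverse $\chi^{-1}$ is concave with $\chi^{-1}(0)=0$, whence $\chi^{-1}(s/n)\le s\,\chi^{-1}(1/n)$; equivalently $q_n\theta(n)=\chi^{-1}(s/n)/\chi^{-1}(1/n)\le s$.

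Now I split $\mathbb E[\ell^{q_nS_n}]$ over $A_n$ and its complement and apply Cauchy--Schwarz precisely as in Theorem \ref{th4}, obtaining $\mathbb E[\ell^{q_nS_n}]-\ell^{q_n(nd+\beta\theta(n))}\le m(A_n)^{1/2}\,\mathbb E[\ell^{2q_nS_n}]^{1/2}$. Inserting the two-sided bound (\ref{eq4.3}) and using $\tau(1-q)=qd+\chi(q)$, the left side is at least $\ell^{nq_nd}\,[\,C^{-1}\ell^{n\chi(q_n)}-\ell^{q_n\beta\theta(n)}\,]$. Because $q_n\beta\theta(n)\le\beta s$ and $n\chi(q_n)=s$, the bracket $C^{-1}\ell^{s}-\ell^{\beta s}$ is bounded below by a fixed positive multiple of $\ell^{s}$ as soon as $s$ exceeds a constant multiple of $1/(1-\beta)$ (enough to beat the quasi-Bernoulli constant of (\ref{eq4.3})). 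Cancelling $\ell^{nq_nd}$ and bounding the second factor by $\mathbb E[\ell^{2q_nS_n}]^{1/2}\le C^{1/2}\ell^{nq_nd+(n/2)\chi(2q_n)}$ leaves an estimate of the form $m(A_n)^{1/2}\ge c'\,\ell^{\,s-(n/2)\chi(2q_n)}$. Here hypothesis (\ref{notflat}) enters decisively: the doubling bound $\chi(2q_n)\le C\,\chi(q_n)$ gives $(n/2)\chi(2q_n)\le(C/2)\,n\chi(q_n)=(C/2)s$, so the exponent $s-(n/2)\chi(2q_n)\ge(1-C/2)s$ is bounded below by a constant depending only on $s$, and $m(A_n)\ge c(\beta)>0$ for all large $n$.

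I expect this first-moment/second-moment balance to be the main obstacle, and it is exactly what the hypotheses are designed to handle: $q_n$ must be large enough that $C^{-1}\ell^{n\chi(q_n)}$ overcomes the multiplicative constant of (\ref{eq4.3}), which forces $s=n\chi(q_n)$ to be large, yet the Cauchy--Schwarz penalty $\ell^{(n/2)\chi(2q_n)}$ must not overwhelm it; without a doubling control on $\chi$ the ratio $\chi(2q_n)/\chi(q_n)$ could be arbitrarily large and the argument would break, whereas (\ref{notflat}) pins it to $C$. Since concavity of $\chi^{-1}$ keeps $q_n\theta(n)\le s$, the target fraction $\beta$ can be pushed arbitrarily close to $1$ simply by enlarging $s$ (at the cost of shrinking the fixed lower bound $c(\beta)$), and letting $\beta\uparrow1$ completes the proof of $L_0\ge1$.
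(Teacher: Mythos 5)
Your proof is correct and takes essentially the same route as the paper's: the Cauchy--Schwarz splitting of $\mathbb{E}[\ell^{qS_n}]$ over $A_n$ and its complement as in Theorem \ref{th4}, the choice $q=\chi^{-1}(\lambda/n)$ combined with concavity of $\chi^{-1}$ to get $q\,\theta(n)\le\lambda$, the doubling hypothesis (\ref{notflat}) to control $\chi(2q)$ in the denominator, and the shift-invariance/ergodicity argument to upgrade $m(A_n)\ge c>0$ to an almost-sure conclusion. The only cosmetic difference is that you run the ergodicity step through the a.s.-constant invariant random variable $L$ rather than through the invariance of the ``infinitely often'' event, which is equivalent.
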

\begin{remark}\label{remarkbeta}
Hypothesis (\ref{notflat}) states that function $\chi$ 
is not flat when $q\to 0$,
$q>0$. We know that $\chi$ is a continuous convex function such that 
$\chi(0)=0$.
It follows  that $\chi(q)\ge 2\,\chi(q/2)$. It is then easy to check that
under hypothesis (\ref{notflat}), 
there exists $\alpha> 1$ and $C>0$ such that when
$q>0$ is small enough 
$$\chi(q)\ge C\, q^\alpha\ .$$
Whe can then deduce that 
$$
\theta(t)\le C\,t^{1/\alpha}
$$
for some $C>0$ and we can replace $\theta(t)$ by $t^{\beta}$ 
with $\beta=1/\alpha$ in  Theorem \ref{th+}.
\end{remark}
\vskip 0.5cm
Of course, we can obtain a similar type result if we have some information on
$\chi(q)$ when $q\to 0$ and $q<0$. 
\begin{theorem}\label{th-}
The notations are the same as in Theorem \ref{th+}. Suppose that there exists
a constant $C>0$ such that 
\begin{equation}
\forall q\in[-1,0),\quad 0<\chi(q)\le C\,\chi(q/2)\ .
\end{equation}
Then, $dm$-almost surely,
$$\liminf_{n\to
  +\infty}\frac{S_n-nd}{\theta(n)}\le-1,$$ 
where $\theta(t)=\frac{1}{\chi^{-1}(-1/t)}$ and $\chi^{-1}$ is the inverse
fonction of $\chi$ on $[0,\chi(-1)]$.\\
As a consequence, we have, for all $0<a<1$,
$$m\bot{\mathcal H}^{\psi_a}\quad\text{where}
\quad\psi_a(t)=t^d\ell^{a\theta(\log_\ell 1/t)}\ .$$
In particular, $m$ is singular with
respect to ${\mathcal H}^d$. 
\end{theorem}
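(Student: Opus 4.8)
The plan is to mirror the proof of Theorem \ref{th4}, replacing the scale $\sqrt n$ by the scale $\theta(n)$ dictated by $\chi$ and using test exponents $q<0$ to control the \emph{lower} tail of $S_n$. As there, I first reduce to the shift-invariant ergodic representative: $m$ is strongly equivalent to a $T$-invariant ergodic quasi-Bernoulli measure $\tilde m$ with the same $L^q$-spectrum, and $\abs{S_n-\tilde S_n}\le C$, so I may assume $m$ itself is $T$-invariant and ergodic. The goal is then, for each $\delta\in(0,1)$, to produce a constant $c(\delta)>0$ with
$$
m\left(\left\{S_n\le nd-(1-\delta)\theta(n)\right\}\right)\ge c(\delta)\qquad\text{for all large }n.
$$
Once this holds, the bound passes to the ``i.o.'' event, giving $m(\{S_n\le nd-(1-\delta)\theta(n)\ \text{i.o.}\})\ge c(\delta)>0$; since $\theta(n)\to+\infty$ and $\abs{S_n(x)-S_n(Tx)}\le C$, this event is $T$-invariant, so ergodicity forces its measure to be $1$. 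Letting $\delta\to0$ yields $\liminf_n(S_n-nd)/\theta(n)\le-1$ almost surely.

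For the tail bound I fix $q<0$ and split, as in Theorem \ref{th4},
$$
\mathbb E[\ell^{qS_n}]=\mathbb E[\ell^{qS_n}\mathds1_{S_n\ge nd-(1-\delta)\theta(n)}]+\mathbb E[\ell^{qS_n}\mathds1_{S_n< nd-(1-\delta)\theta(n)}].
$$
Since $q<0$, the first term is at most $\ell^{q(nd-(1-\delta)\theta(n))}$, and Cauchy--Schwarz bounds the second by $m(\{S_n<nd-(1-\delta)\theta(n)\})^{1/2}\,\mathbb E[\ell^{2qS_n}]^{1/2}$. Feeding the two-sided estimate (\ref{eq4.3}) for $\mathbb E[\ell^{qS_n}]$ and $\mathbb E[\ell^{2qS_n}]$ into this, writing $\tau(1-q)=qd+\chi(q)$ and $\tau(1-2q)=2qd+\chi(2q)$, and cancelling the common factor $\ell^{qnd}$, I arrive at
$$
m\left(\left\{S_n< nd-(1-\delta)\theta(n)\right\}\right)^{1/2}\ge C^{-1/2}\,\ell^{-\frac n2\chi(2q)}\left(C^{-1}\ell^{\,n\chi(q)}-\ell^{\,\abs{q}(1-\delta)\theta(n)}\right).
$$

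The decisive step is the choice of $q=q_n$ and the verification that the right-hand side is bounded below by a positive constant uniformly in $n$. Let $q_n^*<0$ be defined by $\chi(q_n^*)=1/n$, so that $\theta(n)=1/\abs{q_n^*}$, and set $q_n=\beta q_n^*$ for a fixed $\beta\ge1$. Then $\abs{q_n}(1-\delta)\theta(n)=(1-\delta)\beta$ exactly, while convexity of $\chi$ with $\chi(0)=0$ gives $\chi(\beta q_n^*)\ge\beta\chi(q_n^*)$, hence $n\chi(q_n)\ge\beta$; and iterating the non-flatness hypothesis $\chi(2q)\le C\chi(q)$ from $q_n^*$ up to $2\beta q_n^*$ gives $n\chi(2q_n)\le C^{\lceil\log_2(2\beta)\rceil}$, a bound independent of $n$. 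Consequently the prefactor $\ell^{-\frac n2\chi(2q_n)}$ is a fixed positive constant and the bracket is at least $C^{-1}\ell^{\beta}-\ell^{(1-\delta)\beta}$; choosing $\beta>(\log_\ell C)/\delta$ makes this a fixed positive number, which furnishes $c(\delta)>0$. I expect this to be the main obstacle: unlike Theorem \ref{th4}, where the quadratic profile lets one push the deviation to any multiple of the scale, here the point is to reach the \emph{sharp} constant $1$, and this is possible only because hypothesis (\ref{notflat}) bounds the second-moment correction $n\chi(2q_n)$ uniformly while convexity makes $n\chi(q_n)$ grow at least linearly in $\beta$, so a large but fixed $\beta$ defeats the quasi-Bernoulli constant $C$.

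Finally, the measure-theoretic consequence follows as in Section \ref{bernoulli}. Given $0<a<1$, pick $a'\in(a,1)$ and apply the liminf statement with $1-\delta=a'$: almost surely there is a full-measure set $E_0$ on which $m(I_n(x))\ge\abs{I_n(x)}^d\ell^{a'\theta(\log_\ell 1/\abs{I_n(x)})}=\psi_{a'}(\abs{I_n(x)})$ for infinitely many $n$. The usual covering argument then gives $\mathcal H^{\psi_{a'}}(E_0)<+\infty$, and since $\psi_a\ll\psi_{a'}$ near $0$ we get $\mathcal H^{\psi_a}(E_0)=0$ while $m(E_0)=1$, i.e. $m\bot\mathcal H^{\psi_a}$. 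Taking any $a'\in(0,1)$ and using $t^d\ll\psi_{a'}(t)$ gives in particular $m\bot\mathcal H^{d}$.
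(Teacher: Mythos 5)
Your proof is correct and follows essentially the same route as the paper, which proves Theorem \ref{th-} as the negative-$q$ mirror of Theorem \ref{th+}: the same splitting of $\mathbb E[\ell^{qS_n}]$ with Cauchy--Schwarz, the two-sided bound (\ref{eq4.3}), a choice of $q_n$ of order $\chi^{-1}(1/n)$, and the shift-invariance/ergodicity argument from Theorem \ref{th4}. The only (cosmetic) difference is that you take $q_n=\beta\,\chi^{-1}(1/n)$ and iterate the doubling hypothesis together with convexity of $\chi$, whereas the paper takes $q=\chi^{-1}(\lambda/n)$ and uses concavity of $\chi^{-1}$ with a single application of the hypothesis; the two parametrizations are equivalent.
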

\begin{remark}
As observed in Remark \ref{remarkbeta} we can replace $\theta(t)$
by $t^\beta$ for some $\beta<1$ in the conclusions of Theorem \ref{th-}.
\end{remark}

We now give the proof of Theorem \ref{th+}. The proof of Theorem \ref{th-} is
similar.

\begin{proof}[Proof of Theorem \ref{th+}.]The function $\chi$ is a continuous
convex function on $[0,1]$ such that $\chi(0)=0$  and $\chi(q)>0$ if
$q>0$. It follows that $\chi$ is increasing and we can define the 
inverse $\chi^{-1}$ on $[0,\chi(1)]$.

Let $0<a<1$ and $q>0$ sufficiently small. 
Using the same argument as in Theorem \ref{th4}, we have 
\begin{eqnarray*}
m(\{S_n\ge nd+a\theta(n)\})^{1/2}&\geq& 
\displaystyle\frac{c_1\ell^{n\chi(q)-aq\theta(n)}-1}
{c_2\ell^{\frac{n}{2}\chi(2q)-aq\theta(n)}}\\
&\geq& \displaystyle\frac{c_1\ell^{n\chi(q)-aq\theta(n)}-1}
{c_2\ell^{\frac{nC}{2}\chi(q)-aq\theta(n)}}\ .
\end{eqnarray*}
If $\lambda>0$ and $q=\chi^{-1}(\lambda/n)$, we get
$$m(\{S_n\ge nd+a\theta(n)\})^{1/2}\ge
\frac{c_1\ell^{\lambda-a\chi^{-1}(\lambda/n)\theta(n)}-1} 
{c_2\ell^{\frac{C\lambda}{2}-a\chi^{-1}(\lambda/n)\theta(n)}}\ .
$$
Recall that $\chi^{-1}$ is a concave function on $[0,\chi(1)]$ such that
$\chi^{-1}(0)=0$. It follows that $\chi^{-1}(\lambda t)\le\lambda\chi^{-1}(t)$
if $\lambda\ge 1$ and $0\le \lambda t\le \chi(1)$. Finally,
$$\lambda-a\chi^{-1}(\lambda/n)\theta(n)\ge\lambda(1-a)$$
if $\lambda\ge 1$ and $n\ge \lambda/\chi(1)$.

Choose $\lambda$ such that $c_1\ell^{\lambda(1-a)}-1>0$. We get
$$m(\{S_n\ge nd+a\theta(n)\})^{1/2}\ge
\frac{c_1\ell^{\lambda(1-a)}-1} 
{c_2\ell^{\frac{C\lambda}{2}-a\chi^{-1}(\lambda/n)\theta(n)}}\ge
\frac{c_1\ell^{\lambda(1-a)}-1} 
{c_2\ell^{\frac{C\lambda}{2}}}=c>0
$$
if $n$ is sufficiently large. The end of the proof is the same as in Theorem
\ref{th4}. 
\end{proof}
Theorem \ref{th+} and Theorem \ref{th-} can be applied in the important case
where the function $\tau$ is analytic. This is in particular the case when the
measure $m$ is a Gibbs measure associated to an H\"older potential (see
\cite{Zin}). 

\begin{corollary}
Let $m$ be a quasi-Bernoulli measure in $[0,1)^D$. 
Let 
$$\delta=\dim\,(\text{\em supp}\,(m))\quad \text{and}\quad d=\dim(m)\ .$$
Suppose that  $\tau$ is analytic. There are only two possible cases : 
\begin{enumerate}
\item[(i)] $d=\delta$ and the measure $m$ is strongly equivalent 
to the Hausdorff
  measure ${\mathcal H}^\delta$ on $\text{\em supp}\,(m)$.  
\end{enumerate}
\noindent or
\begin{enumerate}
\item[(ii)] $d<\delta$ of the measure $m$ is
  singular with respect to ${\mathcal H}^d$ but absolutely continuous with
  respect to $\widehat{\mathcal P}^d$.
\end{enumerate}
\end{corollary}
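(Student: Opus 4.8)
The plan is to read off both cases from the local behaviour at $q=0$ of the function $\chi(q)=\tau(1-q)-qd$ introduced in Theorem~\ref{th+}. Since $\tau$ is convex and $d=-\tau'(1)$, the function $\chi$ is convex with $\chi(0)=0$ and $\chi'(0)=-\tau'(1)-d=0$; thus $0$ is a global minimum and $\chi\ge 0$. With the standard convention that the partition function sums only over cubes of positive mass, $\tau(0)=\log_\ell g=\delta$, so that $\chi(1)=\tau(0)-d=\delta-d$, which already forces $d\le\delta$. Because $\tau$, hence $\chi$, is analytic, I would split into the two exhaustive and mutually exclusive cases $\chi\not\equiv 0$ and $\chi\equiv 0$, and show that they correspond respectively to (ii) and (i).

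In the case $\chi\not\equiv 0$, analyticity prevents $\chi$ from vanishing on any interval; since $\chi$ is convex with a zero minimum at $0$, a second zero would produce such an interval, so $\chi(q)>0$ for every $q\neq 0$ in $[-1,1]$. In particular $\chi(1)=\delta-d>0$, i.e. $d<\delta$. Moreover the first non-vanishing Taylor coefficient of $\chi$ at $0$ has even order $2k\ge 2$ and is positive, so $\chi(q)\sim c\,|q|^{2k}$ and $\chi(q)/\chi(q/2)\to 2^{2k}$ as $q\to 0$, while away from $0$ this ratio is continuous and positive on a compact set, hence bounded. I would conclude that hypothesis~(\ref{notflat}) holds on $(0,1]$ and its mirror version on $[-1,0)$, so Theorems~\ref{th+} and~\ref{th-} apply and give at once $m\ll\widehat{\mathcal P}^d$ and $m\bot{\mathcal H}^d$, which is case~(ii).

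In the case $\chi\equiv 0$ we have $\tau(q)=\delta(1-q)$ and $d=\delta$, and the remaining task --- the step I expect to be the main obstacle --- is to upgrade this affine spectrum into a two-sided pointwise control of the cube masses. Feeding $\tau(q)=\delta(1-q)$ into~(\ref{eq4.2}) and letting $q\to+\infty$ bounds $(\max_{I\in\mathcal{G}_n}m(I))^q$ by $C^{q}g^{n(1-q)}$, whence $\max_{I\in\mathcal{G}_n}m(I)\le C\,g^{-n}$; letting $q\to-\infty$ controls the smallest term symmetrically and gives $\min_{I\in\mathcal{G}_n}m(I)\ge C^{-1}g^{-n}$. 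Since the homogeneous measure $m_0$ satisfies $m_0(I)=g^{-n}$ on every $I\in\mathcal{G}_n$, this says $C^{-1}m_0(I)\le m(I)\le C\,m_0(I)$ on all $\ell$-adic cubes. I would then pass from cubes to arbitrary Borel sets by writing open sets as countable disjoint unions of $\ell$-adic cubes and invoking outer regularity of the two finite measures, obtaining the strong equivalence $m\approx m_0$. Composing with the comparison $m_0\approx{\mathcal H}^\delta$ on $\text{supp}\,(m)$ recalled at the start of the section yields that $m$ is strongly equivalent to ${\mathcal H}^\delta$, which is case~(i).
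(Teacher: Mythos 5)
Your proof is correct, and it diverges from the paper's in a useful way on the dichotomy itself. The paper splits according to whether $m$ is strongly equivalent to the homogeneous measure $m_0$, and for the implication ``$m$ not strongly equivalent to $m_0$ implies $d<\delta$'' it defers to a classical argument (Corollary 5.5 of \cite{Ya07}); only then does it use $\tau(0)=\delta\not=d$ to conclude $\tau(1-q)\not\equiv dq$ and invoke analyticity. You instead split on whether $\chi(q)=\tau(1-q)-qd$ vanishes identically, and in the degenerate case you extract $m\approx m_0$ directly from (\ref{eq4.2}) by letting $q\to\pm\infty$, which is elementary and self-contained; this buys you independence from the external reference at the price of needing the identity theorem to propagate $\tau(q)=\delta(1-q)$ from a neighborhood of $q=1$ to all of $\R$ (convexity alone would not give this, but analyticity is assumed, so this is fine), together with the stated convention that the partition sums run over cubes of positive mass. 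In the non-degenerate case the two arguments coincide: analyticity and convexity force the leading Taylor term of $\chi$ at $0$ to be of even order with positive coefficient, and Theorems \ref{th+} and \ref{th-} apply. You are in fact slightly more careful than the paper here, since you verify hypothesis (\ref{notflat}) on all of $(0,1]$ and its mirror on $[-1,0)$ --- positivity of $\chi$ away from $0$ via convexity plus analyticity, and boundedness of the ratio $\chi(q)/\chi(q/2)$ away from the origin --- rather than only near $q=0$.
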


\begin{proof}
As in the introduction of the section, denote by $m_0$ the homogeneous
measure on $\text{supp}\,(m)$. The measure $m_0$ is strongly equivalent to the
Hausdorff measure ${\mathcal H}^\delta$ on $\text{supp}\,(m)$. 
Using a similar argument as in \cite{Ya07} 
Corollary 5.5, it is classical to prove that, if the 
quasi-Bernoulli measure $m$ is not strongly
equivalent to the measure $m_0$, its dimension $d$ satisfies 
$d<\delta$. On the other hand, we know that
$\tau(0)=\dim(\text{supp}(m))=\delta$. In the case where $d<\delta$, we can
then conclude that $\tau(1-q)\not\equiv dq$. 
If $\tau$ is analytic, we obtain that there exists a smallest integer 
$n\ge 2$ such that $\tau^{(n)}(1)\not=0$. Moreover $\tau$ is convex. It
follows that $n=2k$ is even and $\tau^{(n)}(1)=\lambda$ is a strictly positive
real number. We can then write 
$$\tau(1-q)=dq+\lambda q^{2k}+o(q^{2k})$$
in a neighborhood  of $q=0$. Finally, the hypothesis of Therorem \ref{th+} and
Theorem \ref{th-} are satisfied.
\end{proof}

\end{document}